\documentclass[a4paper,12pt, reqno]{amsart}
\usepackage{a4}

\usepackage{amsfonts,exscale}
\usepackage{mleftright}

\usepackage{srcltx,hyperref}
\usepackage{cite}

\bibliographystyle{amsalpha}
\usepackage{color}

\long\def\forget#1{}

\usepackage{amsmath}
\usepackage{amssymb}
\usepackage{amscd}
\usepackage{textcomp}
\usepackage{amsthm}
\theoremstyle{plain}
\newtheorem{Lemma}{Lemma}[section]
\newtheorem{Theorem}[Lemma]{Theorem}

\theoremstyle{definition}


\newcommand{\DS}{\displaystyle}


\newcommand{\es}{\enspace}

\newcommand{\dpl}{{\mathchoice{\mbox{\rm (\hspace{-0.15em}(}}
{\mbox{\rm (\hspace{-0.15em}(}}
{\mbox{\scriptsize\rm (\hspace{-0.15em}(}}
{\mbox{\tiny\rm (\hspace{-0.15em}(}}}}
\newcommand{\dpr}{{\mathchoice{\mbox{\rm )\hspace{-0.15em})}}
{\mbox{\rm )\hspace{-0.15em})}}
{\mbox{\scriptsize\rm )\hspace{-0.15em})}}
{\mbox{\tiny\rm )\hspace{-0.15em})}}}}
\newcommand{\invlim}[1][]{\ifthenelse{\equal{#1}{}}
{\DS \lim_{\longleftarrow}}
{\DS \lim_{\underset{#1}{\longleftarrow}}}
}
\newcommand{\dirlim}[1][]{\ifthenelse{\equal{#1}{}}
{\DS \lim_{\longrightarrow}}
{\DS \lim_{\underset{#1}{\longrightarrow}}}
}

\newcommand{\BOne} {{\mathchoice{\hbox{\rm1\kern-2.7pt l\kern.9pt}}
{\hbox{\rm1\kern-2.7pt l\kern.9pt}}
{\hbox{\scriptsize\rm1\kern-2.3pt l\kern.4pt}}
{\hbox{\scriptsize\rm1\kern-2.4pt l\kern.5pt}}}}

\newcommand{\BC}{{\mathbb{C}}}

\newcommand{\BF}{{\mathbb{F}}}

\newcommand{\BN}{{\mathbb{N}}}

\newcommand{\sS}{{\mathscr{S}}}

\usepackage{mathrsfs}

\newcommand{\CA}{{\mathcal{A}}}

\newcommand{\CF}{{\mathcal{F}}}
\newcommand{\CG}{{\mathcal{G}}}

\newcommand{\CK}{{\mathcal{K}}}

\newcommand{\CP}{{\mathcal{P}}}

\newcommand{\CR}{{\mathcal{R}}}
\newcommand{\CS}{{\mathcal{S}}}

\newcommand{\FN}{{\mathfrak{N}}}

\newcommand{\FP}{{\mathfrak{P}}}

\newcommand{\Fg}{{\mathfrak{g}}}

\newcommand{\Fw}{{\mathfrak{w}}}


\DeclareMathOperator{\Tr}{Tr}

\begin{document}
\title{Divisibility problems for function fields}
\author{Stephan Baier, Arpit Bansal, Rajneesh Kumar Singh}
\address{Stephan Baier, Ramakrishna Mission Vivekananda Educational Research Institute, Department of Mathematics, 
G. T. Road, PO Belur Math, Howrah, West Bengal 711202, 
India; email: email$_{-}$baier@yahoo.de}
\address{Arpit Bansal, Jawaharlal Nehru University, School of Physical Sciences, New-Delhi 110067, India; email: apabansal@gmail.com}
\address{Rajneesh Kumar Singh, Ramakrishna Mission Vivekananda Educational Research Institute, G. T. Road, PO Belur Math, Howrah, 
West Bengal 711202, India; rajneeshkumar.s@gmail.com}
\subjclass[2010]{11B75;11N36;11L40;11P99}
\begin{abstract} We investigate three combinatorial problems considered by Erd\"os, Rivat, Sar\-k\"ozy and Sch\"on regarding divisibility properties of 
sum sets and sets of shifted products of integers in the context of function fields. Our results in this function field setting are
better than those previously obtained for subsets of the integers. These improvements depend on a version of the large sieve for sparse
sets of moduli developed recently by the first and third-named authors.
\end{abstract}
\maketitle

\tableofcontents
\section{Introduction and main results}
In this paper, we investigate three combinatorial problems considered by Erd\"os, Rivat, Sar\-k\"ozy and Sch\"on regarding divisibility properties of 
sum sets and sets of shifted products of integers in the context of function fields. The results we obtain 
in this function field setting are comparably better than those obtained in the case of integers. We first state the best known
results on these problems in the integer setting. 

\begin{Theorem}[Theorem in \cite{Bai}] \label{Psettheo} 
We call a subset $\mathcal{S}$ of the positive integers a $\mathcal{P}$-set if no element of $\mathcal{S}$ divides the sum of two 
{\rm (}not necessarily distinct{\rm )} larger elements of $\mathcal{S}$. Let $\mathcal{S}$ be a $\mathcal{P}$-set of pairwise coprime positive integers and 
denote by 
$\mathcal{A}_{\mathcal{S}}(N)$ the number of elements of $\mathcal{S}$ not exceeding $N$. 
If $\varepsilon>0$, then there exist infinitely many positive integers $N$ such that
$$
\mathcal{A}_{\mathcal{S}}(N)\le (3+\varepsilon)N^{2/3}(\log N)^{-1}.
$$
\end{Theorem}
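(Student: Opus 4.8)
The plan is to read the $\mathcal{P}$-set condition as a restriction on residue classes and then feed this into a large sieve inequality, using the pairwise coprimality to supply the logarithmic saving. Fix a large parameter $N$ and let $\mathcal{N}=\{t\in\mathcal{S}:Q<t\le N\}$ play the role of the sifted set, where $Q$ is a threshold to be chosen (eventually $Q\approx N^{1/2}$). For every $s\in\mathcal{S}$ with $s\le Q$ the elements of $\mathcal{N}$ are all larger than $s$, so the $\mathcal{P}$-condition gives $s\nmid t+t'$ for all $t,t'\in\mathcal{N}$. Reducing modulo $s$, the set $R_s$ of residues occupied by $\mathcal{N}$ satisfies $R_s\cap(-R_s)=\emptyset$ in $\BZ/s\BZ$; in particular $|R_s|\le s/2$, so $\mathcal{N}$ avoids at least $s/2$ residue classes modulo each such $s$. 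This is the arithmetic input for the sieve.

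Next I would invoke a large sieve inequality for the sparse family of moduli $\{s\in\mathcal{S}:s\le Q\}$. Since these moduli are pairwise coprime and each forbids at least $s/2$ classes, one obtains a bound of the shape $|\mathcal{N}|\le (N+Q^2)/L(Q)$ in which every modulus $s$ contributes an amount $\gg 1$, so that $L(Q)\gg\mathcal{A}_{\mathcal{S}}(Q)$. Choosing $Q\approx N^{1/2}$ balances $N$ against $Q^2$ and produces the key relation $\mathcal{A}_{\mathcal{S}}(N)\ll N/\mathcal{A}_{\mathcal{S}}(N^{1/2})$, up to the lower-order term $\mathcal{A}_{\mathcal{S}}(N^{1/2})$. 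Writing $\mathcal{A}_{\mathcal{S}}(N)=N^{\rho}$ and iterating this self-similar inequality, the fixed point of $\rho=1-\rho/2$ forces $\rho=2/3$, which is the source of the exponent $N^{2/3}$ in the statement.

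The pairwise coprimality enters twice. On the one hand it legitimises treating the elements of $\mathcal{S}$ as a valid sparse family of sieve moduli; on the other hand, since each prime divides at most one element of $\mathcal{S}$, one has $\mathcal{A}_{\mathcal{S}}(Q)\le\pi(Q)+O(1)\ll Q/\log Q$, and it is this prime-counting bound, fed back through the sieve, that should upgrade the clean exponent bound $N^{2/3}$ to the sharper $N^{2/3}(\log N)^{-1}$ and pin down the constant. Because the recursion carries $\mathcal{A}_{\mathcal{S}}(N^{1/2})$ in the denominator, it can only be run profitably at scales where $\mathcal{S}$ is not too sparse; for this reason the conclusion is obtained not for all $N$ but for infinitely many $N$, which I would select by an averaging/pigeonhole argument over dyadic scales $2^j$ locating levels on which the iterated inequality is simultaneously efficient.

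\textbf{Main obstacle.} The crude form of the sieve, using only $\omega(s)\ge s/2$ and $L(Q)\gg\mathcal{A}_{\mathcal{S}}(Q)$, delivers a bound of the shape $N^{2/3}\log N$, which is too weak by a factor $\log^2 N$. The heart of the argument is therefore to recover these two logarithms: one must combine the coprimality estimate $\mathcal{A}_{\mathcal{S}}(Q)\ll Q/\log Q$ with the large sieve in an essentially lossless way, and simultaneously carry out the multi-scale iteration so that the optimal constant emerges as $3+\varepsilon$. Controlling the error terms across the $\sim\log\log N$ scales of the iteration, and verifying that a positive proportion of scales are ``good'', will be the most delicate part of the proof.
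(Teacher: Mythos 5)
Your proposal cannot be checked against a proof in this paper, because the paper does not prove Theorem \ref{Psettheo}: it is quoted from \cite{Bai}, and what the paper proves is the function field analogue (Theorem \ref{Psettheoff}, via the arithmetic sieve of Theorem \ref{arithls}), whose proof is stated to follow \cite{Bai} closely. Measured against that method, your opening steps are exactly right and match the paper's Section 4: the $\mathcal{P}$-condition forces the sifted set to avoid $\geq 1+[s/2]$ residue classes modulo each $s\in\mathcal{S}$ with $s\le Q$, one feeds this into a Montgomery-type arithmetic large sieve with the pairwise coprime elements of $\mathcal{S}$ as moduli, and the self-similar inequality $\mathcal{A}_{\mathcal{S}}(N)\lesssim N/\mathcal{A}_{\mathcal{S}}(N^{1/2})$ with fixed point $2/3$ explains the exponent. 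You also candidly concede that this crude form only yields $N^{2/3}\log N$, off by $(\log N)^2$.

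The genuine gap is in your proposed repair, which is wrong-directional and would fail: the prime-counting bound $\mathcal{A}_{\mathcal{S}}(Q)\le\pi(Q)+O(1)\ll Q/\log Q$ is an \emph{upper} bound on $\mathcal{A}_{\mathcal{S}}(Q)$, and an upper bound can never enlarge the sieve denominator $L(Q)$; it only controls the secondary term $\mathcal{A}_{\mathcal{S}}(Q)$ in $\mathcal{A}_{\mathcal{S}}(N)\le \mathcal{A}_{\mathcal{S}}(Q)+(N+Q^2)/L(Q)$, which is already negligible at the relevant size. The missing idea --- visible in the paper's own Theorem \ref{arithls}, where the modulus set is precisely $\CK=\{1\}\cup\{P_1\cdots P_n\}$, products of \emph{arbitrarily many} distinct elements of $\FP$ --- is to sieve with the full multiplicative closure of $\mathcal{S}\cap[1,Q]$ (legitimate moduli exactly because of pairwise coprimality, and each such product $k$ satisfies $\Fg(k)\ge 1$ since $\Fw(s)\geq s/2$). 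Under the counter-hypothesis $\mathcal{A}_{\mathcal{S}}(M)>(3+\varepsilon)M^{2/3}/\log M$ for all large $M$, partial summation gives
\begin{equation*}
\sum_{s\in\mathcal{S},\ s\le X} s^{-2/3}\ \geq\ \Big(\tfrac{2(3+\varepsilon)}{3}+o(1)\Big)\log\log X,
\end{equation*}
and counting products of $j$ distinct elements for all $j$ (the terms of size $\asymp(\tfrac{2c}{3}\log\log Q)^{j-1}/(j-1)!$ exponentiate) yields $\mathcal{A}_{\mathcal{K}}(Q)\gg Q^{2/3}(\log Q)^{2(3+\varepsilon)/3-1}$. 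Since $2(3+\varepsilon)/3-1>1$, taking $Q=N^{1/2}$ makes $(N+Q^2)/\mathcal{A}_{\mathcal{K}}(Q)=o\bigl(N^{2/3}/\log N\bigr)$, contradicting the hypothesis at scale $N$. This product-set count is the sole source of both the $\log$ saving and the constant: the threshold $2c/3-1=1$ is met exactly at $c=3$, which is why $3+\varepsilon$ appears in the statement. Note also that no multi-scale dyadic pigeonhole over $\sim\log\log N$ levels is needed; a single application of the sieve at $Q=N^{1/2}$ suffices, the ``infinitely many $N$'' arising directly from negating the conclusion at all sufficiently large scales, just as in the paper's proof of Theorem \ref{Psettheoff}.
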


\begin{Theorem}[Theorem 3 in \cite{ErS}] \label{ErSatheo} If $N\in \mathbb{N}$ is sufficiently large, $\mathcal{A}\subseteq \{1,...,N\}$ and 
$a+a'$ is square-free for all $a,a'\in \mathcal{A}$, then 
$$
\sharp(\mathcal{A})\le 3N^{3/4}\log N. 
$$
\end{Theorem}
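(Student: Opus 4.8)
The plan is to detect the squarefree hypothesis with exponential sums and then feed the resulting information into the large sieve inequality. Write $e(t)=e^{2\pi i t}$, put $Z=\sharp(\mathcal{A})$, and set
\[
S(\alpha)=\sum_{a\in\mathcal{A}}e(a\alpha),\qquad S(\alpha)^2=\sum_{n}r(n)\,e(n\alpha),
\]
where $r(n)=\sharp\{(a,a')\in\mathcal{A}\times\mathcal{A}:a+a'=n\}$. Since every sum $a+a'$ is squarefree, $r(n)=0$ as soon as $p^2\mid n$ for some prime $p$. The first move is to record, for every prime $p$, the vanishing identity
\[
\sum_{a=0}^{p^2-1}S(a/p^2)^2=p^2\sum_{p^2\mid n}r(n)=0,
\]
obtained by orthogonality of additive characters modulo $p^2$ together with the fact that each $n$ with $p^2\mid n$ has $r(n)=0$.

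The decisive step is to convert this identity into a lower bound the large sieve can use. Isolating the term $a=0$, which equals $S(0)^2=Z^2$, and applying the triangle inequality gives, for every prime $p$,
\[
\sum_{a=1}^{p^2-1}\bigl|S(a/p^2)\bigr|^2\ \ge\ \Bigl|\,\sum_{a=1}^{p^2-1}S(a/p^2)^2\,\Bigr|=Z^2 .
\]
Here the point is that the sumset hypothesis is invisible to the difference-based quantity $\sum_a|S(a/p^2)|^2$ taken on its own, but it forces the non-conjugate square $\sum_a S(a/p^2)^2$ to vanish; the triangle inequality transfers this vanishing into a clean lower bound $Z^2$, uniform in $p$.

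I would then sum this over all primes $p\le P$, with $P$ a parameter to be optimised. For each $a\in\{1,\dots,p^2-1\}$ the fraction $a/p^2$ reduces either to a fraction with denominator $p^2$ (when $\gcd(a,p^2)=1$) or to a fraction $b/p$ with denominator $p$ (when $p\mid a$); as $p$ runs over primes $\le P$ these reduced fractions are pairwise distinct and all have denominator at most $P^2$. The large sieve inequality, applied with moduli up to $Q=P^2$ on the interval $\{1,\dots,N\}$, therefore yields
\[
Z^2\,\pi(P)\ \le\ \sum_{p\le P}\ \sum_{a=1}^{p^2-1}\bigl|S(a/p^2)\bigr|^2\ \le\ (N+P^4)\,Z ,
\]
whence $Z\le (N+P^4)/\pi(P)$. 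Balancing $N$ against $P^4$ dictates $P\asymp N^{1/4}$; taking $P=\lceil N^{1/4}\rceil$ gives $N+P^4\le(2+o(1))N$ and $\pi(P)\sim 4N^{1/4}/\log N$, so that $Z\le(\tfrac12+o(1))N^{3/4}\log N$, comfortably within the asserted bound $3N^{3/4}\log N$ for all large $N$.

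The main obstacle is conceptual and lies entirely in the first two steps: one has to notice that the additive (sumset) constraint surfaces only through $\sum_aS(a/p^2)^2$ and not through the modulus squares, and then to recover usable positivity by the triangle inequality. After that, the only things to verify are bookkeeping: that the fractions collected from different primes, and the reduced versus unreduced fractions arising from a single denominator $p^2$, are genuinely distinct, so that the large sieve may be applied to their union; and the elementary optimisation in $P$, where any choice near $N^{1/4}$ already delivers the stated exponent and constant.
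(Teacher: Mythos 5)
Your argument is correct, and it in fact yields the sharper constant $\tfrac12+o(1)$ in place of $3$: the identity $\sum_{a \bmod p^2} S(a/p^2)^2 = p^2\sum_{p^2\mid n} r(n) = 0$ is valid (note $r(0)=0$ since $a+a'\ge 2$, so the $n=0$ term is harmless), the triangle-inequality step correctly extracts $\sum_{a=1}^{p^2-1}|S(a/p^2)|^2\ge Z^2$, and the bookkeeping you flag — distinctness of the reduced fractions with denominators $p$, $p^2$, $q$, $q^2$ across distinct primes, and spacing $\ge P^{-4}$ for the Farey points of denominator at most $P^2$ — all goes through, so the large sieve for $\delta$-spaced points gives $Z^2\pi(P)\le (N+P^4)Z$ and the optimisation $P=\lceil N^{1/4}\rceil$ is fine.

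Be aware, however, that the paper contains no proof of this statement: it is quoted from Erd\"os--S\'ark\"ozy \cite{ErS}, and the only argument of this flavour actually carried out in the paper is for the function-field analogue, Theorem \ref{ErSatheoff}, which takes a genuinely different route. There the squarefree hypothesis is first turned into a purely combinatorial fact — since $f+f'\not\equiv 0\bmod{P^2}$, the residue classes $h$ and $-h$ modulo $P^2$ cannot both meet $\mathcal{F}$, so at least $1+\left[q^{\deg P^2}/2\right]$ classes modulo $P^2$ avoid $\mathcal{F}$ — and this count of forbidden classes is fed into the arithmetic (Montgomery-style) form of the large sieve, Theorem \ref{arithls}, which is itself deduced from the additive-character inequality via Lemma \ref{Lemma11}. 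Your route shortcuts all of that machinery: the exact vanishing of $\sum_a S(a/p^2)^2$ replaces both the $h\leftrightarrow -h$ pairing and the entire apparatus of $\Fg$, $\CK$ and Lemma \ref{Lemma11}, plugging directly into the analytic large sieve. Quantitatively the two are essentially equivalent — the pairing gives $\Fw(p^2)\ge (p^2-1)/2$, hence $\Fw/(p^2-\Fw)\approx 1$ per prime and again a denominator of size $\pi(P)$ — and your version is slightly cleaner with a better constant. What the residue-class route buys instead is uniformity: the same Theorem \ref{arithls} also handles the $\mathcal{P}$-set problem (Theorem \ref{Psettheoff}), where one only knows that many classes are avoided and no exact identity like yours exists; your identity trick is special to the situation where a sumset avoids an entire arithmetic progression $0\bmod p^2$.
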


\begin{Theorem}[Theorem 4.2. in \cite{SaR}] \label{SarRitheo} If $N\in \mathbb{N}$ is sufficiently large, $\mathcal{A},\mathcal{B}\subseteq \{1,...,N\}$ and 
$ab+1$ is square-free for all $a\in \mathcal{A}$ and $b\in \mathcal{B}$, then 
$$
\min\{\sharp(\mathcal{A}),\sharp(\mathcal{B})\}\le 2N^{3/4}\log N. 
$$
\end{Theorem}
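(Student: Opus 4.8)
The plan is to bound the product $\sharp(\mathcal{A})\sharp(\mathcal{B})$ and then invoke $\min\{\sharp(\mathcal{A}),\sharp(\mathcal{B})\}\le(\sharp(\mathcal{A})\sharp(\mathcal{B}))^{1/2}$, so that it suffices to prove $\sharp(\mathcal{A})\sharp(\mathcal{B})\ll N^{3/2}(\log N)^2$. The starting point is that the square-free hypothesis is equivalent to saying that for every prime $p$ there is \emph{no} pair $(a,b)\in\mathcal{A}\times\mathcal{B}$ with $p^2\mid ab+1$. Since $p\mid a$ forces $ab+1\equiv 1\pmod p$, only the $a$ coprime to $p$ are relevant, and for such $a$ the congruence $ab\equiv-1\pmod{p^2}$ pins $b$ to the single residue class $-\overline{a}\pmod{p^2}$, where $\overline{a}$ is the inverse of $a$ modulo $p^2$. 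Hence $\mathcal{B}$ omits every class of the form $-\overline{a}\pmod{p^2}$ with $a\in\mathcal{A}$ and $p\nmid a$; writing $\omega(p^2)$ for the number of distinct such classes, $\omega(p^2)$ is just the number of distinct residues modulo $p^2$ of the elements of $\mathcal{A}$ prime to $p$. As the elements of $\mathcal{A}$ are distinct integers in $[1,N]$, each class modulo $p^2$ receives at most $N/p^2+1$ of them, so after discarding the few multiples of $p$ one gets $\omega(p^2)\gg \sharp(\mathcal{A})\,(N/p^2+1)^{-1}$ for $p\le P\le N^{1/2}$.

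Next I would turn the omission of $\omega(p^2)$ classes into a lower bound for exponential sums and feed it to a large sieve. By the standard Montgomery argument (Cauchy--Schwarz applied to the counting function of $\mathcal{B}$ along residue classes modulo $p^2$), omitting $\omega(p^2)$ classes forces $\sum_{h=1}^{p^2-1}\bigl|\sum_{b\in\mathcal{B}}e(hb/p^2)\bigr|^2\ge \tfrac{\omega(p^2)}{p^2}\,\sharp(\mathcal{B})^2$, where $e(x)=e^{2\pi i x}$. Summing over the primes $p\le P$ and splitting each frequency $h/p^2$ according to whether its reduced denominator is $p$ or $p^2$, the left-hand side is controlled by the large-sieve sum over the sparse set of square moduli $\{p^2:p\le P\}$, together with a contribution from the prime moduli $\{p:p\le P\}$ that the classical inequality bounds by $(N+P^2)\sharp(\mathcal{B})$. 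Invoking a large sieve for this sparse set of moduli of the requisite strength — one whose error term for a sequence supported on $[1,N]$ has the shape $N+P^{2+o(1)}$, rather than the $P^4$ produced by applying the classical inequality blindly to moduli of size $p^2$ — I arrive at $\sharp(\mathcal{B})\sum_{p\le P}\omega(p^2)p^{-2}\ll N+P^{2+o(1)}$. Substituting the lower bound for $\omega(p^2)$ and summing over $p$ (using $N+p^2\asymp N$ in the range $p\le P\le N^{1/2}$) converts this into a clean relation between $\sharp(\mathcal{A})$, $\sharp(\mathcal{B})$ and $P$, namely $\sharp(\mathcal{A})\sharp(\mathcal{B})\ll (N+P^{2+o(1)})\,N\,(\log N)/P$.

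Finally I would optimise in $P$. The balance point $P\asymp N^{1/2}$ makes the two terms $N$ and $P^{2}$ comparable and yields $\sharp(\mathcal{A})\sharp(\mathcal{B})\ll N^{3/2}(\log N)^2$, hence $\min\{\sharp(\mathcal{A}),\sharp(\mathcal{B})\}\ll N^{3/4}\log N$; tracking the constants through the Montgomery step and the sum over $p$ is what produces the explicit factor $2$, and the symmetry of the hypothesis in $\mathcal{A}$ and $\mathcal{B}$ lets one run the same argument with the roles reversed. The decisive step, on which the quality of the exponent rests entirely, is the large sieve for the sparse set of square moduli: the classical large sieve sees only the size $P^2$ of the largest modulus and so contributes $P^4$, which is far too weak to beat the trivial bound, and it is precisely the gain from exploiting the sparsity of $\{p^2\}$ that produces the exponent $3/4$. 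Establishing (or, in the integer setting, citing) such a sparse large sieve of sufficient strength is the main obstacle, and it is exactly the ingredient that the improved function-field large sieve later sharpens.
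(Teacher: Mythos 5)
First, a framing remark: the paper does not prove Theorem \ref{SarRitheo} at all --- it is quoted with attribution from \cite{SaR}, and the method behind it is the one the paper mirrors in its own proof of the function-field analogue, Theorem \ref{SarRitheoff}. Judged against that method, your proposal has a fatal gap: the ``decisive step'' you invoke, a large sieve for the sparse family $\{p^2 : p\le P\}$ with constant of the shape $N+P^{2+o(1)}$, is not merely unproven but \emph{false}. The set of reduced sampling points $h/p^2$ with $p\le P$ has cardinality $\asymp P^3/\log P$, and any large sieve constant $\Delta$ must satisfy $\Delta\ge$ (number of points): test the inequality on a sequence supported at a single integer $n_0\in[1,N]$, for which every $\bigl|\sum_b c_b e(hb/p^2)\bigr|^2=|c_{n_0}|^2$. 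Hence $\Delta\gg P^3/\log P$ regardless of $N$; at your balance point $P\asymp N^{1/2}$ this is $N^{3/2-o(1)}$, far above the $N+P^2\asymp N$ you require. Indeed the optimal conjecture for square moduli (cf.\ \cite{BaZh}) is $\Delta\asymp(N+P^3)N^{o(1)}$, and the introduction of this paper states explicitly that over $\mathbb{Z}$ the known sparse-moduli large sieves (\cite{Bai1}, \cite{BaZh}, \cite{Zha}) yield no improvement over the full large sieve for these problems. Worse, even granting the conjectural $\Delta=(N+P^3)N^{o(1)}$, your one-sided Montgomery argument --- sieving $\mathcal{B}$ by the $\omega(p^2)\gg\sharp(\mathcal{A})p^2/N$ omitted classes --- gives only $\sharp(\mathcal{A})\sharp(\mathcal{B})\ll(N+P^3)N(\log N)/P$, which optimizes at $P\asymp N^{1/3}$ to $\min\{\sharp(\mathcal{A}),\sharp(\mathcal{B})\}\ll N^{5/6+o(1)}$, short of $N^{3/4}$. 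The structural reason is that converting the hypothesis into ``$\mathcal{B}$ avoids $\omega(p^2)$ classes'' discards the bilinear structure in $(a,b)$, leaving $\sharp(\mathcal{A})\sharp(\mathcal{B})$ on the left where the true proof has its square root.

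The actual argument of \cite{SaR} (visible in the paper's proof of Theorem \ref{SarRitheoff}) needs no sparse large sieve: one detects $ab\equiv-1\bmod{P^2}$ by orthogonality of multiplicative characters modulo $p^2$, sums over primes $p\le P$, splits by Cauchy--Schwarz into $\sS\le\sS_{\mathcal{A}}^{1/2}\sS_{\mathcal{B}}^{1/2}$, and bounds each factor by the \emph{classical full-moduli} multiplicative large sieve with moduli up to $P^2$, i.e.\ constant $N+P^4$. Against the lower bound $\sS\gg\sharp(\mathcal{A})\sharp(\mathcal{B})\,\pi(P)$ this gives $(\sharp(\mathcal{A})\sharp(\mathcal{B}))^{1/2}\ll(N+P^4)(\log P)/P$, and $P=N^{1/4}$ yields exactly $2N^{3/4}\log N$. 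Your closing claim that ``it is precisely the gain from exploiting the sparsity of $\{p^2\}$ that produces the exponent $3/4$'' therefore inverts the logic of the paper: over $\mathbb{Z}$ the exponent $3/4$ comes from the full large sieve, and it is only in the function-field setting, where Theorems \ref{n=1} and \ref{multipli} furnish an essentially optimal sparse large sieve, that sparsity improves $3/4$ to $2/3$ in Theorem \ref{SarRitheoff}. (A secondary issue: your bound $\omega(p^2)\gg\sharp(\mathcal{A})(N/p^2+1)^{-1}$ requires controlling the multiples of $p$ in $\mathcal{A}$, which is borderline when $N/p$ is comparable to $\sharp(\mathcal{A})$; the character proof handles this by subtracting $(\sharp(\mathcal{A})+\sharp(\mathcal{B}))N/P$ under the contradiction hypothesis.)
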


For the history of these problems, see the papers \cite{Bai}, \cite{ErS} and \cite{SaR}. It is likely that the exponents in
the estimates in the above theorems are not optimal. We note that the exponent $2/3$ in Theorem \ref{Psettheo} cannot be replaced by any value
smaller than $1/2$, however. In fact, Erd\"os and  Sar\-k\"ozy \cite{ErS} gave a simple example of a $\mathcal{P}$-set $\mathcal{S}$ satisfying
$$
\mathcal{A}_{\mathcal{S}}(N)\gg N^{1/2}(\log N)^{-1}
$$
as $N \rightarrow \infty$, namely the set of squares of primes $p \equiv 3 
\bmod {4}$. By a more elaborate construction, Elsholtz and Planitzer \cite{ElP} established the existence of a $\mathcal{P}$-set $\mathcal{S}$ satisfying
$$
\mathcal{A}_{\mathcal{S}}(N)\gg N^{1/2}(\log N)^{-1/2}(\log \log N)^{-2} (\log \log \log N)^{-2} 
$$
as $N \rightarrow \infty$. To date, this is the best known lower bound. It would 
be interesting to produce such lower bounds in the function field setting as well, but we will not consider this problem here.  

The proofs of Theorems \ref{Psettheo}, \ref{ErSatheo} and \ref{SarRitheo} crucially depend on the large sieve. In all three proofs, the sets of relevant moduli in the large sieve are sparse,
but it turns out that the results developed by L. Zhao and the first-named author of this paper on the large sieve with sparse sets of
moduli (see \cite{Bai1}, \cite{BaZh} and \cite{Zha}) are not sufficient to obtain any improvement over what can be established using the large sieve with full sets of moduli. 
However, the situation is different in the function field setting. Recently, the first and third-named authors of the present paper established 
an essentially best possible large sieve inequality with sparse sets of moduli for function fields (see \cite{StephanRS}) which allows to obtain comparably better 
results for the analogues of the combinatorial problems above in the function field setting. This is the object of this paper. We prove the following
results corresponding to Theorems \ref{Psettheo}, \ref{ErSatheo} and \ref{SarRitheo}, where in the theorems below,
$\BF_q$ denotes a finite field with $q$ elements. 

\begin{Theorem} \label{Psettheoff} 
We call a subset $\mathcal{S}$ of $\BF_q[t]$ a 
$\mathcal{P}$-set if it consists of monic polynomials and no element of $\mathcal{S}$ divides the sum of two {\rm (}not necessarily distinct{\rm )} 
elements of $\mathcal{S}$ of larger
degree. Let $\mathcal{S}$ be a $\mathcal{P}$-set of pairwise coprime polynomials in $\BF_q[t]$ and 
denote by $\mathcal{A}_{\mathcal{S}}(N)$ the number of elements of $\mathcal{S}$ 
of degree not exceeding $N$. 
If $\varepsilon>0$, then there exist infinitely many positive integers $N$ such that
$$
\mathcal{A}_{\mathcal{S}}(N)\le q^{N(1/\Phi+\varepsilon)},
$$
where $\Phi$ is the golden ratio, given by
$$
\Phi:=\frac{\sqrt{5}+1}{2}.
$$
\end{Theorem}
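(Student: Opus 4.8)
The plan is to run a large-sieve argument in which the small-degree elements of $\mathcal{S}$ play the role of (pairwise coprime) moduli, invoking the sparse large sieve of \cite{StephanRS} exactly at the point where the integer argument behind Theorem~\ref{Psettheo} is forced to use the classical large sieve. First I would dispose of characteristic $2$: if $\operatorname{char}\BF_q=2$ and some $s\in\mathcal{S}$ admits a larger-degree element $\ell\in\mathcal{S}$, then the admissible pair $(\ell,\ell)$ gives $\ell+\ell=0$, divisible by $s$, so a $\mathcal{P}$-set cannot contain elements of two distinct degrees; then $\mathcal{A}_{\mathcal{S}}(N)=O(1)$ and the bound is trivial. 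Hence I may assume $q$ is odd. Next I reformulate the conclusion: it is equivalent to $\liminf_{N}(\log_q\mathcal{A}_{\mathcal{S}}(N))/N\le 1/\Phi$. Assume for contradiction this fails, fix $\gamma\in(1/\Phi,\liminf)$ and $N_0$ with $\mathcal{A}_{\mathcal{S}}(N)\ge q^{\gamma N}$ for all $N\ge N_0$. For a threshold $1\le M<N$ I split $\mathcal{S}$ into the $\mathcal{A}_{\mathcal{S}}(M)$ moduli of degree $\le M$ and the set $\mathcal{L}$ of large elements of degree in $(M,N]$, of cardinality $L:=\mathcal{A}_{\mathcal{S}}(N)-\mathcal{A}_{\mathcal{S}}(M)$.

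The next step is the sum-free input. Fix a modulus $s$ with $d=\deg s$. Every $\ell\in\mathcal{L}$ has degree $>M\ge d$, so the $\mathcal{P}$-property forbids $s\mid \ell_1+\ell_2$ for all (possibly equal) $\ell_1,\ell_2\in\mathcal{L}$; since $q$ is odd this says that the residues hit by $\mathcal{L}$ form a set $R_s$ with $R_s\cap(-R_s)=\emptyset$ and $0\notin R_s$ (coprimality also forces $0\notin R_s$), whence $|R_s|\le (q^{d}-1)/2$. Writing $n_r$ for the number of $\ell\in\mathcal{L}$ with $\ell\equiv r$, orthogonality of the additive characters $\psi$ of $\BF_q[t]/(s)$ together with Cauchy--Schwarz gives
\[
\sum_{\psi\neq\psi_0}\Bigl|\sum_{\ell\in\mathcal{L}}\psi(\ell)\Bigr|^2 = q^{d}\sum_r n_r^2 - L^2 \ \ge\ q^{d}\cdot\frac{L^2}{|R_s|}-L^2 \ \ge\ L^2 ,
\]
for every modulus $s$.

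Summing over the $\mathcal{A}_{\mathcal{S}}(M)$ moduli, and using that because they are pairwise coprime the nontrivial characters encountered correspond to pairwise distinct reduced fractions (a denominator $e>1$ dividing two coprime moduli would divide their $\gcd=1$), the left-hand side is a genuine large-sieve spectral sum, bounded below by $\mathcal{A}_{\mathcal{S}}(M)\,L^2$. Applying the sparse large sieve of \cite{StephanRS} to the coefficient sequence $\mathbf{1}_{\mathcal{L}}$ (so $\sum_f|a_f|^2=L$) on degrees $\le N$ with these sparse moduli of degree $\le M$ bounds the same sum above by $\bigl(q^{N}+\mathcal{A}_{\mathcal{S}}(M)\,q^{M}\bigr)L$ up to an absolute constant. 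Comparing the two bounds yields
\[
L \ \ll\ \frac{q^{N}}{\mathcal{A}_{\mathcal{S}}(M)} + q^{M}.
\]
The decisive feature is that the dual term is $\mathcal{A}_{\mathcal{S}}(M)\,q^{M}$, i.e. number of moduli times modulus size, rather than the $q^{2M}$ delivered by the classical large sieve; with the latter the same optimization below reproduces the exponent $2/3$ of Theorem~\ref{Psettheo}, and it is precisely this linear-in-the-number-of-moduli saving for sparse coprime moduli that improves $2/3$ to $1/\Phi$.

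Finally I would optimize. Bounding the first piece trivially by $\mathcal{A}_{\mathcal{S}}(M)\le\sum_{d\le M}q^{d}\ll q^{M}$ and the denominator from below by the standing hypothesis $\mathcal{A}_{\mathcal{S}}(M)\ge q^{\gamma M}$, and choosing $M=\lfloor\mu N\rfloor$ with $\mu=1/(1+\gamma)$, I get
\[
\mathcal{A}_{\mathcal{S}}(N)=\mathcal{A}_{\mathcal{S}}(M)+L \ \ll\ q^{\mu N}+q^{(1-\gamma\mu)N}+q^{\mu N}\ \ll\ q^{N/(1+\gamma)},
\]
using $1-\gamma\mu=\mu=1/(1+\gamma)$. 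Against $\mathcal{A}_{\mathcal{S}}(N)\ge q^{\gamma N}$ this forces $\gamma\le 1/(1+\gamma)$, i.e. $\gamma^2+\gamma-1\le 0$, i.e. $\gamma\le 1/\Phi$, contradicting $\gamma>1/\Phi$; the relation $\gamma^2+\gamma=1$ is exactly $1/\Phi=\Phi-1$, and the balance $1-\gamma\mu=\mu$ is where the golden ratio enters. The step I expect to be the main obstacle is securing the large-sieve input in this sharp shape: one must match the sum-free lower bound, phrased through all nontrivial characters modulo each $s$, to the primitive-character (reduced-fraction) normalization in which \cite{StephanRS} is stated — handled above via coprimality — and verify that the degree-$\le M$ elements of $\mathcal{S}$ meet the hypotheses of the sparse large sieve, decomposing them dyadically by degree if the cited inequality is stated at a single scale and checking that degrees near $M$ dominate. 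The remaining constants, from Cauchy--Schwarz and from the sieve, are $O(1)$ and are absorbed into the $\varepsilon$ in the exponent.
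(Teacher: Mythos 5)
Your proposal is correct and, at its core, runs the same argument as the paper's: the $\mathcal{P}$-property plus pairwise coprimality exclude at least half of the residue classes modulo each small element of $\mathcal{S}$, the sparse large sieve of \cite{StephanRS} (Theorem~\ref{n=1}) is applied with the small elements of $\mathcal{S}$ as moduli, and balancing $q^{N}/\mathcal{A}_{\mathcal{S}}(M)$ against $q^{M}$ under the standing assumption $\mathcal{A}_{\mathcal{S}}(N)\ge q^{\gamma N}$ yields $\gamma^{2}+\gamma\le 1$, i.e.\ the golden ratio. The packaging differs, though: the paper first proves a general Montgomery-type arithmetic form of the sieve (Theorem~\ref{arithls}), whose proof (Lemma~\ref{Lemma11}) propagates the single-modulus inequality multiplicatively over the products $\mathcal{K}$ of distinct moduli, and then specializes; you prove only the single-modulus case directly --- your bound $|R_{s}|\le(q^{d}-1)/2$ together with orthogonality and Cauchy--Schwarz is precisely the paper's $\Fw(P)\ge 1+[q^{\deg P}/2]$, hence $\Fg(P)\ge 1$, in the ``$R=P$'' case of Lemma~\ref{Lemma11} --- and you sum over the moduli themselves rather than over $\mathcal{K}$. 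Nothing is lost by discarding the products, since the paper in the end also bounds its denominator $\mathcal{A}_{\mathcal{K}}(Q)$ below by $\mathcal{A}_{\mathcal{S}}(Q)$; and your fixed-ratio choice $M=\lfloor N/(1+\gamma)\rfloor$ is the paper's balancing $N=Q+\lceil\log_{q}\mathcal{A}_{\mathcal{K}}(Q)\rceil$ read in the other direction. Your characteristic-$2$ reduction is more careful than the paper's one-line remark, and correct.

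The one quantitative weak point --- which you yourself flagged as the expected main obstacle --- is the passage from all nontrivial additive characters mod $s$ to reduced fractions. Coprimality does make the fractions distinct across moduli, as you argue, but the reduced denominators range over \emph{all} monic divisors $D\neq 1$ of the $s$'s (the elements of $\mathcal{S}$ need not be irreducible), so the modulus set fed into Theorem~\ref{n=1} has cardinality up to $\sum_{\deg s\le M}d(s)$, where $d(s)$ is the number of monic divisors of $s$; since $d(s)$ can be as large as $2^{\deg s}$, your dual term is in the worst case $2^{M}\mathcal{A}_{\mathcal{S}}(M)q^{M}$, not $\mathcal{A}_{\mathcal{S}}(M)q^{M}$ ``up to an absolute constant.'' Kept crudely, this factor degrades the conclusion to $\gamma^{2}+\gamma\le 1+\log_{q}2$. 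The repair is the function-field analogue of $d(n)\ll_{\delta}n^{\delta}$: a polynomial of degree $m$ has $O(m/\log_{q}m)$ distinct monic irreducible factors, whence $d(s)\le C(\delta,q)\,q^{\delta\deg s}$, so the extra factor is $q^{o(M)}$ and is absorbed into the $\varepsilon$ exactly as you intended. Note that the paper carries the very same loss as the factor $2^{Q}$ in \eqref{desired} and absorbs it equally casually (its final display strictly yields $\beta+\beta^{2}\le 1+\log_{q}2$ before concluding $\beta+\beta^{2}\le 1$), so your argument needs this small patch no more and no less than the paper's does.
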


We remark that the statement of Theorem \ref{Psettheoff} is empty if $q=2^n$ for some $n\in \mathbb{N}$ since in this case, $2f=0$ for any polynomial $f\in \BF_q[t]$. 

\begin{Theorem} \label{ErSatheoff}
If $\varepsilon>0$, $N\in \mathbb{N}$ is sufficiently large, 
$\mathcal{F}$ is a set of monic polynomials in $\BF_q[t]$ of degree not exceeding $N$ and 
$f+f'$ is square-free for all $f,f'\in \mathcal{F}$, then 
$$
\sharp(\mathcal{F})\le q^{N(2/3+\varepsilon)}.
$$
\end{Theorem}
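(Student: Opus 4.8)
The plan is to run a large sieve argument with the sparse family of moduli $\{P^2 : P\ \text{monic irreducible},\ \deg P=d\}$, where $d\approx N/3$ will be optimized at the end, feeding in the square-free hypothesis as a local saving of a factor $2$ at each such modulus. Throughout I may assume $q$ is odd: if $q$ is even then $f+f=0$ fails to be square-free, so the hypothesis (applied with $f'=f$) forces $\mathcal{F}=\emptyset$ and the bound is trivial. Embedding the monic polynomials of degree $\le N$ into the polynomials of degree $<N+1$, set $c_f=1$ if $f\in\mathcal{F}$ and $c_f=0$ otherwise, and write $S(a/Q)=\sum_{\deg f<N+1}c_f\,e(af/Q)$ for the standard additive character $e(\cdot)$ on $\BF_q[t]$, so that $\sum_f|c_f|^2=\sharp(\mathcal{F})$.

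The local input is as follows. Fix a monic irreducible $P$ of degree $d$ and let $R_P\subseteq \BF_q[t]/(P^2)$ be the image of $\mathcal{F}$. The square-free hypothesis modulo $P^2$ says that $f+f'\not\equiv 0\pmod{P^2}$ for all $f,f'\in\mathcal{F}$, which is exactly the statement $R_P\cap(-R_P)=\emptyset$; in particular $0\notin R_P$. Since $q$ is odd, the involution $r\mapsto -r$ fixes only $0$, so $R_P$ meets at most one element of each of the $(q^{2d}-1)/2$ pairs $\{r,-r\}$, whence $\sharp R_P\le (q^{2d}-1)/2$. Writing $n(b)=\#\{f\in\mathcal{F}:f\equiv b\pmod{P^2}\}$ and combining orthogonality with Cauchy--Schwarz (the nonzero $n(b)$ being supported on $\sharp R_P$ classes) gives
\[
\sum_{a\bmod P^2}\bigl|S(a/P^2)\bigr|^2=q^{2d}\sum_b n(b)^2\ \ge\ \frac{q^{2d}}{\sharp R_P}\,\sharp(\mathcal{F})^2\ \ge\ 2\,\sharp(\mathcal{F})^2 .
\]

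Next I pass to primitive fractions. Since $e(P b'f/P^2)=e(b'f/P)$, the contribution of those $a$ with $P\mid a$ equals $\sum_{b\bmod P}|S(b/P)|^2$, so summing the previous display over all $P$ of degree $d$ yields
\[
\sum_{\deg P=d}\ \sum_{\substack{a\bmod P^2\\ P\nmid a}}\bigl|S(a/P^2)\bigr|^2\ \ge\ 2\pi_q(d)\,\sharp(\mathcal{F})^2-\sum_{\deg P=d}\ \sum_{b\bmod P}\bigl|S(b/P)\bigr|^2 ,
\]
where $\pi_q(d)$ is the number of monic irreducibles of degree $d$. Splitting off the $b=0$ term contributes $\pi_q(d)\sharp(\mathcal{F})^2$, while the remaining primitive part $\sum_{\deg P=d}\sum_{P\nmid b}|S(b/P)|^2$ is itself a large sieve sum over the moduli $\{P:\deg P=d\}$ and is therefore $\ll (q^{N}+q^{2d})\,\sharp(\mathcal{F})$ by the large sieve inequality. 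Feeding in the upper bound from the essentially optimal sparse large sieve of \cite{StephanRS} for the moduli $\{P^2:\deg P=d\}$, namely $\ll (q^{N}+q^{3d})\,\sharp(\mathcal{F})$, and rearranging gives
\[
\pi_q(d)\,\sharp(\mathcal{F})\ \ll\ q^{N}+q^{3d}.
\]
Since $\pi_q(d)\sim q^{d}/d$, the choice $d=\lfloor N/3\rfloor$ balances $q^{N-d}$ against $q^{2d}$ and yields $\sharp(\mathcal{F})\ll d\,q^{2N/3}\le q^{N(2/3+\varepsilon)}$ for $N$ large.

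The main obstacle is making the passage to primitive fractions lossless. For an individual $P$ the imprimitive contribution $\sum_{b\bmod P}|S(b/P)|^2$ can be as large as $q^{d}\sharp(\mathcal{F})^2$, dwarfing the local main term $2\sharp(\mathcal{F})^2$, so one cannot argue modulus by modulus. The point is that after summing over all $P$ of degree $d$ the primitive part of this contribution is exactly a large sieve sum with constant $q^{N}+q^{2d}$, which lands on the same side of the final inequality as, and is dominated by, the sparse constant $q^{N}+q^{3d}$; hence the correction is harmless. The remaining bookkeeping is to verify that the factor $\pi_q(d)\sim q^{d}/d$ together with the implied constants and any subexponential losses in the two large sieve inequalities are all absorbed into the arbitrarily small $\varepsilon$, which is precisely where the hypothesis that $N$ be sufficiently large is used.
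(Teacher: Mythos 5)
Your proof is correct, but it takes a genuinely different route from the paper's. The paper deduces Theorem \ref{ErSatheoff} from its Montgomery-type arithmetic sieve (Theorem \ref{arithls}), sieving by \emph{all} squares $P^2$ with $\deg P^2\le Q=\lceil N/3\rceil$, so that the square-free hypothesis enters only through the support $\CA_{\CK}(Q)\gg q^{Q/2}/Q$ of the sieve weight $\Fg$. You instead run a one-level argument at the single degree $d=\lfloor N/3\rfloor$: the factor-$2$ local saving from the involution $r\mapsto -r$ (legitimate after your reduction to odd $q$; note the paper's opening observation tacitly needs $f+f'\ne 0$ as well, a point you handle and the paper does not) beats the unavoidable $|S(0)|^2$ contribution, and you then invoke the additive large sieve, Theorem \ref{n=1}, twice --- once for the primitive fractions with denominators $P^2$, $\deg P=d$, giving the constant $q^{N}+q^{3d}$, and once for the correction with denominators $P$, giving $q^{N}+q^{2d}$. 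This is in effect the additive-character twin of the paper's own proof of Theorem \ref{SarRitheoff}, and all your steps check out: $\sharp R_P\le (q^{2d}-1)/2$, the orthogonality identity $\sum_{a\bmod P^2}|S(a/P^2)|^2=q^{2d}\sum_b n(b)^2$, the passage to primitive fractions, and the final balance $\pi_q(d)\,\sharp(\CF)\ll q^{N}+q^{3d}$ with $\pi_q(d)\gg q^{d}/d$. Your route also buys something concrete: in the paper's deduction, with $Q=\lceil N/3\rceil$ and $\CA_{\CK}(Q)\asymp q^{Q/2}$, the first term of \eqref{desired} is $q^{N+1}/\CA_{\CK}(Q)\gg q^{5N/6+O(1)}$, so the claimed bound $q^{N(2/3+\varepsilon)}$ does not follow from that display as written; moreover, since the second term of \eqref{desired} is $q^{Q(1+\log_q 2)+O(1)}$, optimizing $Q$ in that framework yields at best the exponent $(1+\log_q 2)/(3/2+\log_q 2)$, which is strictly larger than $2/3$ for every fixed $q$. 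Your fixed-degree argument, which keeps only the genuinely sparse modulus set $\{P^2:\deg P=d\}$ and harvests the full count $\pi_q(d)$ of moduli as the saving, is what actually attains the stated exponent $2/3$ uniformly in $q$.
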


\begin{Theorem} \label{SarRitheoff} If $\varepsilon>0$, 
$N\in \mathbb{N}$ is sufficiently large, 
$\mathcal{F},\mathcal{G}$ are sets of monic polynomials in $\BF_q[t]$ of degree not exceeding $N$ and 
$fg+1$ is square-free for all $f\in \mathcal{F}$ and $g\in \mathcal{G}$, then 
$$
\min\{\sharp(\mathcal{F}),\sharp(\mathcal{G})\}\le q^{N(2/3+\varepsilon)}. 
$$
\end{Theorem}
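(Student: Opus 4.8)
The plan is to exploit the square-free condition locally at squares of monic irreducibles and then feed the resulting bilinear character sum into the sparse large sieve of \cite{StephanRS}. First I would record that $fg+1$ being square-free for all $f\in\mathcal{F}$, $g\in\mathcal{G}$ means precisely that $P^2\nmid fg+1$ for every monic irreducible $P\in\BF_q[t]$; in particular the counting function
$$
N_P:=\sharp\{(f,g)\in\mathcal{F}\times\mathcal{G}:\ P^2\mid fg+1\}
$$
vanishes for every such $P$. A pair can satisfy $fg\equiv-1\bmod P^2$ only if both factors are units modulo $P$, so orthogonality of the Dirichlet characters modulo $P^2$ gives
$$
N_P=\frac{1}{\phi(P^2)}\sum_{\chi\bmod P^2}\overline{\chi}(-1)\,S_{\mathcal{F}}(\chi)\,S_{\mathcal{G}}(\chi),\qquad S_{\mathcal{F}}(\chi):=\sum_{f\in\mathcal{F}}\chi(f),
$$
and similarly for $S_{\mathcal{G}}$ (the terms with $P\mid f$ drop out since then $\chi(f)=0$). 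Isolating the principal character, whose contribution is $\sharp(\mathcal{F}_P)\,\sharp(\mathcal{G}_P)/\phi(P^2)$ with $\mathcal{F}_P,\mathcal{G}_P$ the elements coprime to $P$, and using $N_P=0$, I would obtain the key identity
$$
\sharp(\mathcal{F}_P)\,\sharp(\mathcal{G}_P)=-\sum_{\chi\neq\chi_0}\overline{\chi}(-1)\,S_{\mathcal{F}}(\chi)\,S_{\mathcal{G}}(\chi)\le\sum_{\chi\neq\chi_0}\bigl|S_{\mathcal{F}}(\chi)\bigr|\,\bigl|S_{\mathcal{G}}(\chi)\bigr|.
$$

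Summing this over all monic irreducibles $P$ of degree at most a parameter $D$ and applying Cauchy--Schwarz, I would bound the right-hand side by
$$
\Bigl(\sum_{\deg P\le D}\sum_{\chi\bmod P^2}\bigl|S_{\mathcal{F}}(\chi)\bigr|^2\Bigr)^{1/2}\Bigl(\sum_{\deg P\le D}\sum_{\chi\bmod P^2}\bigl|S_{\mathcal{G}}(\chi)\bigr|^2\Bigr)^{1/2}.
$$
Each factor is exactly of the shape controlled by the large sieve for the sparse family of moduli $\{P^2:\deg P\le D\}$. Invoking the essentially best possible bound of \cite{StephanRS}, in its multiplicative formulation obtained from the additive one in the usual way, I expect an estimate
$$
\sum_{\deg P\le D}\sum_{\chi\bmod P^2}\bigl|S_{\mathcal{F}}(\chi)\bigr|^2\ll q^{N\varepsilon}\bigl(q^N+q^{3D}\bigr)\,\sharp(\mathcal{F}),
$$
the decisive point being that the sparse conductor term is $\sum_{\deg P\le D}\phi^*(P^2)\asymp q^{3D}$ rather than the $q^{4D}$ that the classical large sieve over all moduli of degree up to $2D$ would produce. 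Combining the two factors yields
$$
\sum_{\deg P\le D}\sharp(\mathcal{F}_P)\,\sharp(\mathcal{G}_P)\ll q^{N\varepsilon}\bigl(q^N+q^{3D}\bigr)\bigl(\sharp(\mathcal{F})\,\sharp(\mathcal{G})\bigr)^{1/2}.
$$

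For the matching lower bound I would use $\sharp(\mathcal{F}_P)=\sharp(\mathcal{F})-\sharp\{f\in\mathcal{F}:P\mid f\}$; since every $f$ of degree at most $N$ has at most $N$ distinct irreducible factors, summing the error over $\deg P\le D$ costs at most $N\,\sharp(\mathcal{F})$, which is negligible against the number $\pi_q(D)\asymp q^D/D$ of irreducibles of degree at most $D$. Hence for $N$ large
$$
\sum_{\deg P\le D}\sharp(\mathcal{F}_P)\,\sharp(\mathcal{G}_P)\ge\bigl(\pi_q(D)-2N\bigr)\sharp(\mathcal{F})\,\sharp(\mathcal{G})\gg\frac{q^D}{D}\,\sharp(\mathcal{F})\,\sharp(\mathcal{G}).
$$
Comparing the two displays and cancelling one factor of $(\sharp(\mathcal{F})\sharp(\mathcal{G}))^{1/2}$ gives
$$
\bigl(\sharp(\mathcal{F})\,\sharp(\mathcal{G})\bigr)^{1/2}\ll q^{N\varepsilon}\,\frac{D}{q^D}\bigl(q^N+q^{3D}\bigr).
$$
Choosing $D=\lfloor N/3\rfloor$ balances the two terms ($q^{3D}\le q^N$) and makes the right-hand side $\ll q^{N\varepsilon}\,D\,q^{\,N-D}=q^{\,2N/3+O(N\varepsilon)}$; since $\min\{\sharp(\mathcal{F}),\sharp(\mathcal{G})\}\le(\sharp(\mathcal{F})\sharp(\mathcal{G}))^{1/2}$, after relabelling $\varepsilon$ this is the claimed bound $q^{N(2/3+\varepsilon)}$.

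The main obstacle is the large sieve input itself: one must verify that \cite{StephanRS} applies to the specific sparse family $\{P^2\}$ and delivers the conductor term $q^{3D}\asymp\sum_{\deg P\le D}\phi^*(P^2)$ rather than the weaker $q^{4D}$, and one must justify passing from its additive form to the multiplicative inequality used above with only a loss that can be absorbed into $q^{N\varepsilon}$. This is precisely the feature unavailable in the integer setting, where the known sparse large sieves are too weak, and it is what upgrades the exponent from $3/4$ in Theorem \ref{SarRitheo} to $2/3$ here; the remaining steps are routine bookkeeping.
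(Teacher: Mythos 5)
Your proposal follows essentially the same route as the paper's proof: the orthogonality identity modulo $P^2$ with the principal character isolated, summation over a family of monic irreducibles of degree about $N/3$, Cauchy--Schwarz, and the sparse multiplicative large sieve for the moduli $\{P^2\}$ --- which is exactly the paper's Theorem \ref{multipli}, obtained from the additive inequality of Theorem \ref{n=1} via Gauss sums as you anticipated, and which does deliver the conductor term $\sharp S\cdot q^{2Q-1}\asymp q^{3Q}/Q$ rather than $q^{4Q}$, with no $q^{N\varepsilon}$ loss needed. (The paper sums over irreducibles of degree exactly $\lceil N/3\rceil$ rather than degree at most $D$; this is cosmetic.) One local slip needs repair: in your Cauchy--Schwarz display you enlarged the character sum to \emph{all} $\chi\bmod P^2$, and for that enlarged sum your claimed large sieve bound is false --- the principal characters alone contribute $\sum_{\deg P\le D}\sharp(\mathcal{F}_P)^2\gg \sharp(\mathcal{F})^2q^D/D$, which is not $O\bigl(q^{N\varepsilon}(q^N+q^{3D})\sharp(\mathcal{F})\bigr)$ precisely in the regime $\sharp(\mathcal{F})>q^{2N/3}$ you are trying to rule out. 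The fix is to retain the restriction $\chi\neq\chi_0$ from your key identity and then decompose the nonprincipal characters mod $P^2$ into those primitive mod $P^2$ and those induced by primitive characters mod $P$ (an induced $\chi$ agrees with its inducing character as a function on $\BF_q[t]$, since both vanish exactly when $P\mid g$), after which Theorem \ref{multipli} applies to both subfamilies; this is exactly the paper's decomposition of $\sS_{\CF}$, and it yields your expected bound $\ll q^N\sharp(\mathcal{F})$ for $Q=\lceil N/3\rceil$.

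One point where you genuinely improve on the paper: for the lower bound you estimate $\sum_{\deg P\le D}\sharp\{f\in\mathcal{F}:P\mid f\}\le N\,\sharp(\mathcal{F})$ by counting irreducible divisors of each $f$, which makes the error negligible against $\pi_q(D)\asymp q^D/D$ unconditionally, so your argument is direct. The paper instead bounds $\sharp\{f\in\mathcal{F}:P\mid f\}\le q^{N-Q}$ for each $P$ separately, and this error is only negligible under the contradiction hypothesis $\min\{\sharp(\mathcal{F}),\sharp(\mathcal{G})\}>q^{N(2/3+\varepsilon)}$, forcing the proof to proceed by contradiction. Both bookkeepings lead to the same conclusion; yours is marginally cleaner.
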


In the last two theorems, ``square-free'' has the obvious meaning\  :\  A monic polynomial in $\BF_q[t]$ is square-free if each factor appears 
precisely once in its unique factorization into irreducible monic polynomials. This is equivalent to saying that the polynomial has no 
multiple roots in the algebraic closure $\overline{\BF_q[t]}$ since $\BF_q$ is a perfect field.

To compare the above Theorems \ref{Psettheoff}, \ref{ErSatheoff} and \ref{SarRitheoff} on sets of polynomials with 
Theorems \ref{Psettheo}, \ref{ErSatheo} and \ref{SarRitheo} on sets of integers, we note that the term $q^N$ in the estimates in Theorems 
\ref{Psettheoff}, \ref{ErSatheoff} and \ref{SarRitheoff} takes the rule of $N$ in the estimates in 
Theorems \ref{Psettheo}, \ref{ErSatheo} and \ref{SarRitheo}. Thus, the exponent $2/3$ in Theorem \ref{Psettheo} is replaced by the exponent
$1/\Phi=0.61...$ in Theorem \ref{Psettheoff}, and the exponent $3/4$ in Theorems \ref{ErSatheo} and \ref{SarRitheo} is replaced by the
exponent $2/3$ in Theorems \ref{ErSatheoff} and \ref{SarRitheoff}.

Our proofs of Theorems \ref{Psettheoff}, \ref{ErSatheoff} and \ref{SarRitheoff} follow closely those of Theorems \ref{Psettheo},  
\ref{ErSatheo} and \ref{SarRitheo} in \cite{Bai}, \cite{ErS} and \cite{SaR}, respectively.\\ \\
{\bf Acknowledgement:} We would like to thank Igor Shparlinski for making us aware of Theorems \ref{ErSatheo} and \ref{SarRitheo} and
suggesting to consider them in the function field setting. 

\section{Notations} 
We begin by recalling some standard notations and facts about function fields.
Let $\BF_q$ be a fixed finite field with $q$ elements of characteristic $p$ and let 
$$ 
\Tr \  :\  \BF_q \to \BF_p
$$
be the trace map.
Let $\BF_q(t)_\infty$ be the completion of $\BF_q(t)$ at $\infty$ (i.e. $\BF_q\dpl 1/t\dpr$). 
The absolute value $|\cdot |_\infty$ of $\BF_q(t)_\infty$ is defined by 
\begin{align*}
\mathrel \bigg | \sum_{i= -\infty}^n a_i t^i\bigg|_{\infty} = q^n, \es \text{if} \es 0 \neq a_n \in \BF_q.
\end{align*}
In particular, if $f\in \BF_q[t]$, then 
$$
|f|_{\infty}=q^{\deg f}.
$$
The non-trivial additive character $E: \BF_q \to \BC^\ast$ is defined by 
\begin{align*}
E(x) = \exp \bigg\{\frac{2\pi i}{p} \Tr(x)\bigg\},
\end{align*}
and the map $e:  \BF_q(t)_\infty \to \BC^\ast$ is defined by 
\begin{align*}
e\bigg(\sum_{i= -\infty}^n a_i t^i\bigg) = E(a_{-1}).
\end{align*}
This map $e$ is a non-trivial additive character of $\BF_q(t)_\infty$. Furthermore, the additive characters modulo $f\in \BF_q[t]$ 
are precisely of the form 
$$
\psi: \BF_q[t] \longrightarrow \mathbb{C}^{\ast}, \quad \psi(r)= e\left(r\cdot \frac{g}{f}\right),
$$
where $g$ runs over a system of coset representatives of $(f)$ in $\BF_q[t]$
(see \cite{Hsu}). In analogy to Dirichlet characters, we define multiplicative 
characters modulo $f\in \BF_q[t]$ to be multiplicative maps
$$
\chi : \BF_q[t] \longrightarrow \mathbb{C}
$$
satisfying
$$
\chi(r)=0 \mbox{ if and only if } (r,f)\not=1  
$$
and 
$$
\chi(r_1)=\chi(r_2) \mbox{ if } r_1\equiv r_2 \bmod{f}.
$$
A multiplicative character modulo $f\in \BF_q[t]$ is referred to as primitive if it is not induced by a multiplicative character modulo a 
polynomial $g\in \BF_q[t]$ of smaller degree, i.e. if there does not exist a polynomial $g\in \BF_q[t]$ of smaller degree and a multiplicative
character $\tilde{\chi}$ modulo $g$ such that 
$$
\chi(r)=\tilde{\chi}(r) \mbox{ whenever } (r,f)=1.
$$
The principal character modulo $f\in \BF_q[t]$ is defined as 
$$
\chi_0(r):= \begin{cases} 1 & \mbox{ if } (r,f)=1, \\ 0 & \mbox{ if } (r,f)\not=1. \end{cases}
$$

\section{Large sieve inequalities for function fields}

\subsection{Large sieve with additive characters}
Below we recall the large sieve inequality for dimension 1 from the recent paper \cite{StephanRS} by the first and third-named 
authors. This will serve as the key tool in our present paper.

\begin{Theorem} \label{n=1} Let $Q$ and $N$ be positive integers, and $S$ be a set of non-zero monic polynomials in $\BF_q[t]$
of degree not exceeding $Q$. 
Then 
\begin{align*}
\sum_{f \in S} \ \sum_{\substack{r \bmod f,\\ (r,f)=1}}\mathrel \bigg 
|\sum_{\substack{g \in \BF_q[t]\\ \deg g\le N}} a_g  e\Big(g\cdot \frac{r}{f}\Big)\bigg |^2 \leq \es  
\left(q^{N+1} + \left(\sharp S\right) \cdot q^{Q-1}\right) \cdot  
\sum_{\substack{g \in \BF_q[t]\\ \deg g\le N}} |a_g|^2,
\end{align*}
where $a_g$ are arbitrary complex numbers. 
\end{Theorem}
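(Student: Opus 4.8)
The plan is to argue by duality and then to exploit the non‑archimedean structure of $\BF_q(t)_\infty$. By the duality principle for bilinear forms, the asserted inequality is equivalent to the dual bound
\[
\sum_{\substack{g\in\BF_q[t]\\ \deg g\le N}}\Bigl|\sum_{f\in S}\sum_{\substack{r\bmod f\\ (r,f)=1}} b_{f,r}\,e\Bigl(-g\cdot\tfrac{r}{f}\Bigr)\Bigr|^2\le\Bigl(q^{N+1}+(\sharp S)\,q^{Q-1}\Bigr)\sum_{f\in S}\sum_{\substack{r\bmod f\\ (r,f)=1}}|b_{f,r}|^2
\]
for arbitrary complex numbers $b_{f,r}$, and I would work with this form. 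Expanding the square, the left‑hand side becomes $\sum_{(f,r),(f',r')} b_{f,r}\overline{b_{f',r'}}\,T(\beta)$ with $\beta=r/f-r'/f'$ and $T(\beta)=\sum_{\deg g\le N}e(-g\beta)$. Writing $g=\sum_{j=0}^N c_j t^j$ and recalling that $e$ reads off the coefficient of $t^{-1}$, the character factors as a product over the coordinates $c_j$, and orthogonality of the additive character $E$ of $\BF_q$ yields the exact evaluation
\[
T(\beta)=q^{N+1}\cdot \mathbf{1}\bigl[\,|\beta|_\infty\le q^{-N-2}\,\bigr],
\]
since the sum equals $q^{N+1}$ precisely when the coefficients of $t^{-1},\dots,t^{-(N+1)}$ in $\beta$ all vanish, and $0$ otherwise.

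Because $|\cdot|_\infty$ is non‑archimedean, the condition $|\beta|_\infty\le q^{-N-2}$ says exactly that $r/f$ and $r'/f'$ lie in a common ball of radius $q^{-N-2}$ in the ``torus'' $\BF_q(t)_\infty/\BF_q[t]$ of proper fractions; these balls partition the torus into $q^{N+1}$ cosets. Collecting the Farey fractions $r/f$ according to the ball $\mathcal B$ they occupy, the dual form collapses to
\[
q^{N+1}\sum_{\mathcal B}\Bigl|\sum_{\substack{(f,r)\,:\,r/f\in\mathcal B}} b_{f,r}\Bigr|^2 .
\]
The diagonal part of this expression is exactly $q^{N+1}\sum_{f,r}|b_{f,r}|^2$, and this produces the first term $q^{N+1}$ in the desired bound.

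It remains to control the off‑diagonal contribution, namely the interaction of \emph{distinct} Farey fractions sharing a ball. The basic input is the spacing estimate: two distinct reduced fractions with monic denominators of degree at most $Q$ differ by at least $q^{-2Q}$ in $|\cdot|_\infty$, so a short ball of radius $q^{-N-2}$ can contain several of them only through the arithmetic of the denominators drawn from $S$. Concretely, for a fixed ball $\mathcal B$ with centre $\gamma_0$ and a fixed $f\in S$ of degree $D$, the fractions $r/f\in\mathcal B$ are the solutions of $|r-f\gamma_0|_\infty\le q^{D-N-2}$; there is at most one such $r$ when $D\le N+1$, and at most $q^{D-N-1}$ of them in general. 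Summing this over $f\in S$ bounds the number of admissible pairs in each ball, and feeding that count into the elementary inequality $2|b_{f,r}||b_{f',r'}|\le |b_{f,r}|^2+|b_{f',r'}|^2$ converts the off‑diagonal terms into a multiple of $\sum_{f,r}|b_{f,r}|^2$.

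The hard part is to run this last count sharply enough to produce the \emph{additive} gain $(\sharp S)\,q^{Q-1}$ rather than a lossier multiplicative factor: one must show that, after the weighting above, the off‑diagonal contribution is at most $q^{N+1}\cdot(\sharp S)q^{Q-N-2}\sum_{f,r}|b_{f,r}|^2=(\sharp S)q^{Q-1}\sum_{f,r}|b_{f,r}|^2$. This requires exploiting both the coprimality condition $(r,f)=1$ and the fact that fractions whose denominators have small degree are forced into distinct balls (by the uniqueness of reduced representations recoverable from sufficiently many Laurent coefficients), so that clustering in a single ball can only be driven by the sparse high‑degree denominators in $S$. Making this overlap count precise, and thereby isolating the clean exponent $q^{Q-1}$, is the crux of the argument and the step I expect to demand the most care; it is precisely where the sparsity of $S$ is used, and it is the heart of the method developed in \cite{StephanRS}.
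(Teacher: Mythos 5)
You should know at the outset that this paper contains no proof of Theorem \ref{n=1} to compare against: the statement is recalled verbatim from \cite{StephanRS} (note even the proof environment following Theorem \ref{multipli} is empty), so your attempt stands or falls on its own. Your preliminary reductions are sound: the duality step is standard, your evaluation $T(\beta)=q^{N+1}\cdot\BOne\bigl[\,|\beta|_\infty\le q^{-N-2}\,\bigr]$ is correct (since $\beta=r/f-r'/f'$ has vanishing polynomial part), and since the balls of radius $q^{-N-2}$ partition the torus, the dual form does collapse to $q^{N+1}\sum_{\mathcal B}\bigl|\sum_{r/f\in\mathcal B}b_{f,r}\bigr|^2$ exactly as you say.

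The genuine gap is the step you yourself flag as the crux, and it is worse than ``demanding care'': in the generality in which the statement is quoted, the per-ball multiplicity bound your plan requires is \emph{false}, so the proposal cannot be completed as written. Take any $q$, any $Q\ge 2$, set $N=2Q-2$, and let $S=\{f,f'\}$ with $f=t^Q$, $f'=t^Q+1$, $r=r'=1$. Then $1/f-1/f'=1/(ff')$, so $|1/f-1/f'|_\infty=q^{-2Q}=q^{-N-2}$: the two reduced fractions occupy a single ball, and in fact $e(g/f)=e(g/f')=E(g_{Q-1})$ for every $g$ with $\deg g\le N$, where $g_{Q-1}$ denotes the coefficient of $t^{Q-1}$ in $g$ (expand $(t^Q+1)^{-1}=t^{-Q}-t^{-2Q}+\cdots$ and note $\deg g\le 2Q-2$ kills all further terms). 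The two character vectors are thus \emph{identical} on the whole summation range; choosing $a_g=\overline{E(g_{Q-1})}$ gives a left-hand side at least $2q^{2(N+1)}$, while the asserted right-hand side is $\bigl(q^{N+1}+2q^{Q-1}\bigr)q^{N+1}$, and $2q^{N+1}>q^{N+1}+2q^{Q-1}$ as soon as $q^Q>2$, i.e.\ always. In your language: your target count $\nu(\mathcal B)\le 1+(\sharp S)\,q^{Q-N-2}$ is $<2$ here while $\nu(\mathcal B)=2$; and since ``lying in the same ball'' is an equivalence relation, the dual Gram matrix is block diagonal with rank-one blocks $q^{N+1}J$, so the optimal constant is \emph{exactly} $q^{N+1}\cdot\max_{\mathcal B}\nu(\mathcal B)$ --- no weighting, AM--GM refinement, or appeal to coprimality can circumvent this. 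The conclusion is that the statement as reproduced must carry a hypothesis tying $Q$ to $N$ (as in the source \cite{StephanRS}); and in the complementary regime $2Q\le N+1$, distinct reduced fractions with denominators of degree $\le Q$ are automatically separated by the balls ($q^{-2Q}\ge q^{-N-1}>q^{-N-2}$), the Gram matrix is diagonal, and the inequality holds trivially with the bare constant $q^{N+1}$, rendering your entire off-diagonal analysis vacuous. So in the separated range there is nothing to count, and in the clustered range the count you postpone does not exist; either way the heart of your proposal is missing, not merely unfinished.
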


\subsection{Large sieve with multiplicative characters}
Using Gauss sums, one can turn the large sieve for additive characters into the following large sieve for multiplicative characters. The proof
is completely analogue to the proof of Theorem 7 in \cite{Bom}, which states the large sieve for multiplicative characters in the classical 
case of integers. 

\begin{Theorem} \label{multipli} Under the conditions of Theorem {\rm \ref{n=1}}, we have   
\begin{align*}
\sum_{f \in S} \frac{q^{\deg f}}{\phi(f)}\ \sum_{\substack{\chi \bmod f,\\ \chi \ \text{\rm primitive}}}\mathrel \bigg 
|\sum_{\substack{g \in \BF_q[t]\\ \deg g\le N}} a_g  \chi(g)\bigg |^2 \leq \es  
\left(q^{N+1} + \left(\sharp S\right) \cdot q^{Q-1}\right) \cdot  
\sum_{\substack{g \in \BF_q[t]\\ \deg g\le N}} |a_g|^2,
\end{align*}
where $a_g$ are arbitrary complex numbers and $f$ is the Euler function for $\BF_q[t]$, defined as 
$$
\phi(f):=\sharp \{r \bmod f \ :\ (r,f)=1\}.
$$
\end{Theorem}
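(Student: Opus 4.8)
The plan is to deduce Theorem~\ref{multipli} from the additive large sieve of Theorem~\ref{n=1} by the Gauss sum method, exactly as in the passage from the additive to the multiplicative large sieve over the integers (Theorem~7 of \cite{Bom}). For a multiplicative character $\chi$ modulo $f\in\BF_q[t]$ I would introduce the Gauss sum
$$
\tau(\chi) := \sum_{a \bmod f} \chi(a)\, e\!\left(\frac{a}{f}\right),
$$
together with its twisted variants. The two facts I need are the standard ones: first, that for a \emph{primitive} character one has the separation identity
$$
\chi(g)\,\tau(\overline{\chi}) = \sum_{a \bmod f} \overline{\chi}(a)\, e\!\left(\frac{ag}{f}\right)\qquad\text{for all } g\in\BF_q[t],
$$
both sides vanishing when $(g,f)\neq 1$; and second, that primitivity forces $|\tau(\chi)|^2 = q^{\deg f}$. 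Both are proved over $\BF_q[t]$ just as over $\BZ$: the completeness of $\BF_q(t)_\infty$ and the non-triviality of the character $e$ from Section~2 supply the requisite orthogonality, so I regard these as routine once the character theory recalled above is in place.

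Next I would fix $f\in S$ and set $W(\chi) := \sum_{\deg g \le N} a_g \chi(g)$. Multiplying by $\tau(\overline{\chi})$ and applying the separation identity termwise gives
$$
\tau(\overline{\chi})\, W(\chi) = \sum_{\substack{a \bmod f\\ (a,f)=1}} \overline{\chi}(a)\, T(a),\qquad T(a) := \sum_{\deg g \le N} a_g\, e\!\left(g\cdot\frac{a}{f}\right),
$$
where the restriction $(a,f)=1$ is automatic since $\overline{\chi}(a)=0$ otherwise. As $\chi$ is primitive we have $|\tau(\overline{\chi})|^2 = q^{\deg f}$, so taking absolute squares and summing over the primitive characters modulo $f$ yields
$$
q^{\deg f} \sum_{\substack{\chi \bmod f\\ \chi\ \mathrm{primitive}}} |W(\chi)|^2 = \sum_{\substack{\chi \bmod f\\ \chi\ \mathrm{primitive}}}\Big|\sum_{\substack{a \bmod f\\ (a,f)=1}} \overline{\chi}(a)\, T(a)\Big|^2.
$$

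Now I would enlarge the range of summation from primitive characters to \emph{all} characters modulo $f$; since every summand is non-negative this only increases the right-hand side. Expanding the square and invoking the orthogonality relation $\sum_{\chi \bmod f} \overline{\chi}(a)\chi(b) = \phi(f)$ when $a\equiv b \bmod f$ with $(ab,f)=1$ (and $0$ otherwise) collapses the resulting double sum over $a,b$ to its diagonal, giving
$$
q^{\deg f}\sum_{\substack{\chi \bmod f\\ \chi\ \mathrm{primitive}}} |W(\chi)|^2 \le \phi(f) \sum_{\substack{a \bmod f\\ (a,f)=1}} |T(a)|^2.
$$
Dividing by $\phi(f)$, summing over $f \in S$, and recognising that the resulting right-hand side $\sum_{f\in S}\sum_{(a,f)=1}|T(a)|^2$ is precisely the left-hand side of Theorem~\ref{n=1} (with $r=a$), I would conclude by applying that theorem, which bounds this quantity by $\bigl(q^{N+1} + (\sharp S)\, q^{Q-1}\bigr)\sum_{\deg g \le N}|a_g|^2$. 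This is exactly the asserted inequality.

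The only genuine work lies in verifying the two Gauss sum facts in the function field setting; everything else is formal manipulation of characters. I expect the identity $|\tau(\chi)|^2 = q^{\deg f}$ for primitive $\chi$ to be the step requiring the most care, since it is what makes the weight $q^{\deg f}/\phi(f)$ appear with the correct normalisation, but it follows from the usual argument reducing $|\tau(\chi)|^2$ to the separation identity and orthogonality.
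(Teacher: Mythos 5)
Your proposal is correct and takes exactly the route the paper intends: the paper's own ``proof'' of Theorem~\ref{multipli} is in fact left empty, with the surrounding text only asserting that the Gauss-sum argument of Theorem~7 in \cite{Bom} carries over to $\BF_q[t]$, and your write-up supplies precisely those details (the separation identity for primitive $\chi$, the evaluation $|\tau(\chi)|^2=q^{\deg f}$ which produces the weight $q^{\deg f}/\phi(f)$, enlargement from primitive to all characters followed by orthogonality, and finally an application of Theorem~\ref{n=1}). There are no gaps; your version is a faithful and complete instantiation of the argument the paper merely cites.
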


\begin{proof}
\end{proof}

\subsection{Arithmetic form of the large sieve}
Similarly as in the case of integers, the large sieve with additive characters for function fields 
can be turned into an upper bound sieve. The following 
result corresponds to Montgomery's \cite{Mon} arithmetic form of the large sieve in the classical setting of integers. 

\begin{Theorem} \label{arithls} Let $Q<N$ be positive integers, assume that
\begin{align*}
\FN \subseteq & \{f \in \BF_q[t]\ :\ f \ \text{\rm monic}, \ Q<\deg f \leq N\},\\
\FP \subseteq & \{ P \in  \BF_q[t]\ :\ P \ \text{\rm monic}, \ \deg P \leq  Q\}\ \text{\rm such that }\\ & 
P_1,P_2\in \FP\Longrightarrow (P_1,P_2)=1 \
\text{\rm or } P_1=P_2,\\
\Omega_P :  = & \DS \bigcup_{i=1}^{\Fw(P)}\CR_i(P) \ \text{\rm if } P \in \FP,\ \text{\rm where } \CR_i(P) \ (i=1,...,\Fw(P)) \\
& \text{\rm are distinct residue classes modulo } P,
\end{align*}
and set
\begin{equation*}
\FN^\ast := \{f\in \FN\ :\ f \notin \Omega_P\  \text{\rm for all} \ P \in \FP \}.
\end{equation*}
Further, set $\Fw(P)=0$ if $P\not\in \FP$. 
Define the set $\CK$ to be the union of $\{1\}$ and the set of all products of distinct elements of $\FP$, i.e.
\[
\CK\  :\  = \{1\} \cup \{P_1\cdots P_n\ :\ n\in \BN, \  P_1,...,P_n \ \text{are distinct elements of } \FP \}
\]
and 
\[
\CA_{\CK}(Q) =  \sharp\{k\in \CK\  :\  \deg k \leq Q\}.
\]
Then
\begin{equation} \label{desired}
\# \FN^\ast \leq \left(q^{N+1}+ \CA_{\CK}(Q) \cdot 2^Qq^{Q-1}\right)\bigg( \sum_{\substack{R\in \BF_q[t]\ \text{\rm monic} 
\\ \deg R\leq Q}}\Fg(R)\bigg) ^{-1},
\end{equation}
where the function $\Fg :  \BF_q[t] \rightarrow \mathbb{R}$ is defined by  
\begin{equation*}
\Fg(R) := \prod\limits_{P|R} \frac{\Fw(P)}{q^{\deg P}-\Fw(P)} 
\end{equation*}
if $R\in \CK$ and $\Fg(R)=0$ otherwise.
\end{Theorem}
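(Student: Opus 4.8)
The plan is to run the standard ``upper bound sieve'' derivation of Montgomery, transplanted to $\BF_q[t]$, using Theorem \ref{n=1} as the analytic input. Write $Z:=\#\FN^\ast$ and take the coefficients in Theorem \ref{n=1} to be the indicator sequence $a_g=1$ if $g\in\FN^\ast$ and $a_g=0$ otherwise, so that $\sum_{\deg g\le N}|a_g|^2=Z$. For a monic $k$ and a residue $b\bmod k$ set $n(k,b):=\#\{f\in\FN^\ast: f\equiv b\bmod k\}$, and abbreviate $S_k(r):=\sum_{f\in\FN^\ast}e\bigl(f\cdot\tfrac{r}{k}\bigr)$. First I would record the two elementary identities
\[
T(k):=\sum_{r\bmod k}|S_k(r)|^2=q^{\deg k}\sum_{b\bmod k}n(k,b)^2,\qquad T(k)=\sum_{d\mid k}U(d),
\]
where $U(d):=\sum_{(r,d)=1}|S_d(r)|^2$ is exactly the inner sum over $r$ appearing on the left of Theorem \ref{n=1}; the first follows from orthogonality of additive characters and the second by sorting each fraction $r/k$ according to its reduced denominator $d\mid k$.

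The sieve hypothesis enters through a single combinatorial lower bound on $T(k)$. For $k\in\CK$ the factorisation of $k$ is a product of distinct, pairwise coprime elements of $\FP$, so by the Chinese Remainder Theorem $\FN^\ast$ avoids every residue $b\bmod k$ that lies in a forbidden class $\CR_i(P)$ for some prime divisor $P\mid k$; hence $n(k,b)$ is supported on at most $\prod_{P\mid k}\bigl(q^{\deg P}-\Fw(P)\bigr)$ classes. Cauchy--Schwarz then gives $Z^2\le\bigl(\prod_{P\mid k}(q^{\deg P}-\Fw(P))\bigr)\sum_b n(k,b)^2$, whence
\[
T(k)\ \ge\ Z^2\prod_{P\mid k}\frac{q^{\deg P}}{q^{\deg P}-\Fw(P)}\ =\ Z^2\prod_{P\mid k}\bigl(1+\Fg(P)\bigr)\ =\ Z^2\sum_{d\mid k}\Fg(d).
\]
This is the function field analogue of the key inequality in Montgomery's arithmetic form, and it is the place where the definition of $\Fg$ is forced.

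With these in hand I would apply Theorem \ref{n=1} to the sparse family of moduli $S=\{k\in\CK:\deg k\le Q\}$, for which the right-hand side of the large sieve reads $\bigl(q^{N+1}+(\#S)\,q^{Q-1}\bigr)Z$; this produces an upper bound for the truncated sum $\sum_{k\in\CK,\ \deg k\le Q}U(k)$. The remaining, and \emph{hardest}, step is to convert the per-modulus lower bounds on $T(k)$ into a matching lower bound for this sum of $U(k)$, i.e.\ to show that the total large sieve mass dominates $Z^2\sum_{\deg R\le Q}\Fg(R)$. Here the obstruction is genuine: Möbius inverting $U(k)=\sum_{d\mid k}\mu(k/d)T(d)$ introduces sign cancellation, and because the product of all sieving primes typically has degree far larger than $Q$, no single modulus of degree $\le Q$ sees the full weight $\sum_R\Fg(R)$. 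I would overcome this by summing the identity $T(k)=\sum_{d\mid k}U(d)$ over the divisor-closed family $\CK\cap\{\deg\le Q\}$ and controlling the divisor multiplicities that arise; since any $k$ with $\deg k\le Q$ has at most $2^{\deg k}\le 2^Q$ squarefree divisors, this rearrangement costs exactly the factor $2^Q$, and it is this factor that ends up multiplying the minor term, so that the effective modulus count is $\CA_\CK(Q)\cdot 2^Q$.

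Combining the two bounds, cancelling one power of $Z$, and recalling that $\Fg$ is supported on $\CK$ (so that $\sum_{\deg R\le Q}\Fg(R)$ may be written as a sum over all monic $R$ of degree $\le Q$) then yields the asserted inequality \eqref{desired}. I expect the analytic half of the argument to be routine given Theorem \ref{n=1}, and essentially all of the work to sit in the last paragraph: pinning down the precise combinatorial rearrangement that turns the lower bounds on $T(k)$ into a clean lower bound for the large sieve quantity while keeping the main term $q^{N+1}$ untouched.
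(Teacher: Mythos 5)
Your first half is sound and matches the paper's strategy in spirit: the identities $T(k)=q^{\deg k}\sum_b n(k,b)^2$ and $T(k)=\sum_{d\mid k}U(d)$, and the pointwise bound $T(k)\ge Z^2\sum_{d\mid k}\Fg(d)$ via CRT plus Cauchy--Schwarz, are correct function-field transplants of Montgomery's argument (indeed, summing the paper's key lemma over the divisors of $k$ recovers exactly your inequality). But the step you yourself flag as the crux is where the proposal breaks, and the fix you sketch does not work. Write $m(D)=\#\{k\in\CK:\deg k\le Q,\ D\mid k\}$. Summing $T(k)=\sum_{d\mid k}U(d)$ over the family gives $\sum_D m(D)U(D)$, and two things go wrong. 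First, $m(D)$ is not bounded by $2^Q$: the bound $2^{\deg k}\le 2^Q$ controls the number of divisors of a \emph{single} $k$, hence the number of distinct moduli, $\#S\le 2^Q\CA_\CK(Q)$; but for a fixed $D$ of low degree, $m(D)$ can be of the order of $\CA_\CK(Q)$ itself, which in the applications is exponentially large in $Q$. Second, even granting a uniform multiplicity bound $M$, applying Theorem \ref{n=1} to the multiset of moduli yields $\sum_D m(D)U(D)\le M\bigl(q^{N+1}+\#S\,q^{Q-1}\bigr)Z$, so $M$ multiplies the main term $q^{N+1}$ as well --- nothing in your rearrangement confines the loss to the minor term. (You would also need to cancel the $D=1$ contribution $m(1)U(1)=\CA_\CK(Q)Z^2$, which appears on both sides; your sketch does not address this.)

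The missing idea --- the paper's Lemma \ref{Lemma11}, which is Montgomery's device --- is to prove the lower bound not for $T(k)$ or $U(k)$ but for the intermediate quantity
\[
T'(k):=\sum_{\substack{r\bmod k\\ P\nmid r\ \text{if}\ P\in\FP\ \text{and}\ P\mid k}}\Bigl|S\Bigl(\frac{r}{k}\Bigr)\Bigr|^2\ \ge\ \Fg(k)\,|S(0)|^2 .
\]
This is established multiplicatively: one first proves the shifted version with $S(r/k+\beta)$ for arbitrary $\beta\in\BF_q(t)_\infty$ (replacing $a_g$ by $a_g e(g\beta)$), which makes the inequality stable under composition of coprime moduli, and then verifies the base case $k=P\in\FP$ by exactly your orthogonality-plus-Cauchy--Schwarz computation. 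The whole point of $T'(k)$ is disjointness: a fraction counted in $T'(k)$, put in lowest terms $s/D$, has $D$ sharing a factor with every $\FP$-divisor of $k$, and since distinct elements of $\CK$ differ in some pairwise-coprime factor, the reduced fraction determines $k$ uniquely. Hence summing over $k\in\CK$ with $\deg k\le Q$ counts each reduced fraction at most once, the left-hand side becomes $\bigl(\sum_{\deg R\le Q}\Fg(R)\bigr)Z^2$ with no multiplicities, and Theorem \ref{n=1} applies with the multiplicity-free count $\#S\le 2^Q\CA_\CK(Q)$, leaving $q^{N+1}$ untouched. Without this (or an equivalent disjointness device), your per-modulus bounds on $T(k)$ cannot be assembled into \eqref{desired}.
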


To prove Theorem \ref{arithls}, we first introduce the following additional notations.  
If $\alpha \in \BF_q(t)_\infty$, we set
\[
S(\alpha) := \sum_{g\in \BF_q[t]} a_ g e(g\alpha), 
\]
where we put $a_g:=0$ if $\deg g>N$. We further set 
\[
\Fw'(P) :=\#\{ h\bmod{P}\  :\  a_g = 0 \ \text{for all}\ g \equiv h\bmod{P}  \}
\]
if $P \in \FP$. We define the function $\Fg' :  \BF_q[t] \rightarrow \mathbb{R}$ by  
\begin{equation}
\Fg'(R) \  :\  = \prod\limits_{P|R} \frac{\Fw'(P)}{q^{\deg P}-\Fw(P)} 
\end{equation}
if $R\in \CK$ and $\Fg'(R)=0$ otherwise. 
We first establish the following.

\begin{Lemma} \label{Lemma11} In the above notations, we have
\begin{equation} \label{lemmaeq}
\Fg'(R) \bigg|\sum_{g\in \BF_q[t]} a_g \bigg|^2 \leq \sum\limits_{\substack{r \bmod{R} \\  
P\nmid r \ \text{\rm if}\ P\in \FP \ \text{\rm and}\ P|R}} 
\left| S\Big(\frac{r}{R}\Big) \right|^2
\end{equation}
for every $R\in \BF_q[t]$.
\end{Lemma}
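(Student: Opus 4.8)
The plan is to prove the inequality by induction on the number of prime factors of $R$, reducing the whole statement to a single ``one prime at a time'' estimate handled by finite Fourier analysis on the additive group $\BF_q[t]/(P)$. First note that if $R\notin\CK$ then $\Fg'(R)=0$ by definition and the inequality is trivial, so I may assume $R\in\CK$, i.e. $R=1$ or $R$ is a product of distinct primes of $\FP$. The base case $R=1$ is in fact an equality: the right-hand side reduces to the single term $|S(0)|^2$, while $\Fg'(1)$ is the empty product $1$.

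The key building block is the following estimate for a single prime $P\in\FP$ and an arbitrary ``base point'' $\beta\in\BF_q(t)_\infty$:
\[
\frac{\Fw'(P)}{q^{\deg P}-\Fw(P)}\,\bigl|S(\beta)\bigr|^2 \;\le\; \sum_{\substack{a \bmod P \\ P \nmid a}} \Bigl|S\Bigl(\beta+\tfrac{a}{P}\Bigr)\Bigr|^2 .
\]
To prove this I would group the coefficients by residue class modulo $P$, writing $S_h(\beta):=\sum_{g\equiv h \bmod P} a_g\, e(g\beta)$. Since $g\mapsto e(ga/P)$ depends only on $g\bmod P$ and $e$ is additive, $S(\beta+a/P)=\sum_{h \bmod P} S_h(\beta)\,e(ha/P)$ is exactly the finite Fourier transform of $h\mapsto S_h(\beta)$ over $\BF_q[t]/(P)$. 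Parseval (orthogonality of the additive characters $h\mapsto e(ha/P)$, with normalising constant $q^{\deg P}$) gives $\sum_{a \bmod P}|S(\beta+a/P)|^2=q^{\deg P}\sum_h |S_h(\beta)|^2$. On the other hand $S(\beta)=\sum_h S_h(\beta)$, where only the residue classes containing some $g$ with $a_g\ne 0$ contribute; as the twist $e(g\beta)$ never vanishes, the number of such nonempty classes is $q^{\deg P}-\Fw'(P)$, so Cauchy--Schwarz yields $|S(\beta)|^2\le (q^{\deg P}-\Fw'(P))\sum_h|S_h(\beta)|^2$. Subtracting the $a\equiv 0$ term and combining the two displays produces the factor $\Fw'(P)/(q^{\deg P}-\Fw'(P))$, which is at least the stated $\Fw'(P)/(q^{\deg P}-\Fw(P))$ because the $\Fw(P)$ classes of $\Omega_P$ are empty for the relevant coefficient sequence, so that $\Fw'(P)\ge \Fw(P)$.

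To run the induction I would pass from $R$ to $RP$ with $P\in\FP$ and $P\nmid R$, using the partial-fraction (Chinese Remainder) decomposition $r/(RP)\equiv a/P+b/R$, which sets up a bijection between $r\bmod RP$ and pairs $(a\bmod P,\ b\bmod R)$. Because $R$ is invertible modulo $P$ and $P$ is invertible modulo each prime dividing $R$, this bijection matches the coprimality restrictions exactly: $P\nmid r$ corresponds to $P\nmid a$, and $P'\nmid r$ corresponds to $P'\nmid b$ for the primes $P'\mid R$. Hence the right-hand side for $RP$ factors as a double sum over $a$ and $b$; applying the single-prime estimate with $\beta=b/R$ to the inner sum over $a$, and then the induction hypothesis to the remaining sum over $b$, multiplies in exactly the right Euler factor and gives $\Fg'(RP)\,|S(0)|^2$. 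The main technical point to get right is this CRT step: verifying that $e\bigl(g(a/P+b/R)\bigr)=e(ga/P)\,e(gb/R)$ together with the invertibility statements really does turn the coprimality condition modulo $RP$ into the product of the conditions modulo $P$ and modulo $R$, and that the base-point version of the single-prime estimate is unaffected by the twist $e(gb/R)$, since that twist leaves the set of empty residue classes modulo $P$ unchanged.
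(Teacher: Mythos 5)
Your proposal is correct and follows essentially the same route as the paper: reduce to $R\in\CK$, prove the single-prime case by grouping coefficients into residue classes mod $P$ and applying Parseval plus Cauchy--Schwarz, and then build up composite $R$ via the partial-fraction (CRT) decomposition, using that twisting $a_g$ by $e(g\beta)$ leaves the vanishing pattern of the coefficients (hence $\Fg'$) unchanged -- the paper phrases this as ``if the inequality holds for coprime $R$ and $R'$ then for $RR'$'' rather than adding one prime at a time, a purely cosmetic difference. You are in fact slightly more careful than the paper at the final step, where passing from the natural factor $\Fw'(P)/(q^{\deg P}-\Fw'(P))$ to the stated $\Fw'(P)/(q^{\deg P}-\Fw(P))$ requires $\Fw'(P)\ge\Fw(P)$: you flag and justify this via the intended coefficient sequence vanishing on $\Omega_P$, whereas the paper uses it silently.
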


\begin{proof} The said inequality is trivial if $R\not\in \CK$. Therefore, we assume $R\in \CK$ throughout the following. We note that 
$$
\sum_{g\in \BF_q[t]} a_g= S(0),
$$
and therefore the claimed inequality \eqref{lemmaeq} is equivalent to 
\begin{align}\label{Ineq11}
\Fg'(R)\cdot |S(0)|^2 \leq \sum\limits_{\substack{r \bmod{R}\\ P\nmid r \ \text{\rm if}\ P\in \FP \ \text{\rm and}\ P|R}} \left| S\Big(\frac{r}{R}\Big) \right|^2.
\end{align}
Let $\beta \in \BF_q(t)_\infty$ be arbitrary. In the following, let us replace $a_g$ by $a_g e(g\beta)$ 
in the original definition of $S$. Then $\Fg'(R)$ doesn't change and hence \eqref{Ineq11} gives us 
\begin{align}\label{Ineq12}
\Fg'(R)\cdot |S(\beta)|^2 \leq \sum\limits_{\substack{r \bmod{R}\\ 
P\nmid r \ \text{\rm if}\ P\in \FP \ \text{\rm and}\ P|R}} \left| S\Big(\frac{r}{R}+\beta\Big) \right|^2
\end{align}
for all $\beta \in \BF_q(t)_\infty$. Now suppose that \eqref{Ineq11} holds for $R$ and $R'$ with $(R, R') =1$. 
Then inequalities \eqref{Ineq11} and  \eqref{Ineq12} above imply that the inequality \eqref{Ineq11} also holds for $RR'$ in place of $R$ because
\begin{align*}
 & \sum\limits_{\substack{r \bmod{RR'}\\ P\nmid r \ \text{\rm if}\ P\in \FP \ \text{\rm and}\ P|RR'}} 
 \left| S\Big(\frac{r}{RR'}\Big) \right|^2\\ = & 
 \sum\limits_{\substack{a \bmod{R}\\  P\nmid a \ \text{\rm if}\ P\in \FP \ \text{\rm and}\ P|R}}\ 
 \sum\limits_{\substack{a' \bmod{R'}\\  P\nmid a' \ \text{\rm if}\ P\in \FP \ \text{\rm and}\ P|R'}}\left| S\Big(\frac{a}{R}+\frac{a'}{R'}\Big) 
 \right|^2  \\
 \geq & \sum\limits_{\substack{a \bmod{R}\\  P\nmid a \ \text{\rm if}\ P\in \FP \ \text{\rm and}\ P|R}}\Fg'(R)\cdot
 \left| S\Big(\frac{a}{R}\Big) \right|^2 \\ \geq & 
 \ \Fg'(R)\Fg'(R')\cdot \left|S(0)\right|^2 \\ = &
 \ \Fg'(RR')\cdot \left|S(0)\right|^2.
\end{align*}
By these considerations it suffices to prove \eqref{Ineq11} for for $R = P\in\FP$. 
To this end, we look at 
\begin{equation} \label{look}
Z(P,h) := \sum_{\substack{g\in \BF_q[t]\\ g \equiv h \bmod{P}}} a_g \quad \mbox{and}\quad  Z := \sum_{g\in \BF_q[t]} a_g = S(0).
\end{equation}
Opening up the square and using orthogonality relations for additive characters, it is easy to calculate that 
\[
\sum\limits_{\substack{r\bmod{P}  }} \left| S\Big(\frac{r}{P}\Big) \right|^2 = q^{\deg P}\sum_{h \bmod{P}} 
\left|Z(P,h)\right|^2.
\]
Subtracting  $|S(0)|^2= |Z|^2$, we get 
\begin{equation} \label{subt}
\sum\limits_{\substack{r\bmod{P}\\ P\nmid r  }} \left| S\Big(\frac{r}{P}\Big) \right|^2 = q^{\deg P}\sum_{h \bmod{P}}
\left|Z(P,h)\right|^2- |Z|^2.
\end{equation}
Finally, we use the equation
$$ 
Z = \sum_{h \bmod{P}} Z(P,h)
$$ 
and the Cauchy-Schwarz inequality to deduce that 
\[
|Z|^2 \leq (q^{\deg P}-\Fw'(P))\sum_{h \bmod P} |Z(P,h)|^2. 
\]
Plugging this into \eqref{subt} and using the definition of $Z$ in \eqref{look} gives us the desired inequality 
\begin{align}
\Fg'(P) \mathrel\Big|\sum_{g \in \BF_q[t]} a_g \mathrel\Big |^2 \leq \sum\limits_{\substack{r \bmod{P} \\  P\nmid R}} 
\left| S\Big(\frac{r}{P}\Big) \right|^2.
\end{align}
\end{proof}

Now we are ready for the proof of Theorem \ref{arithls}. 

\begin{proof}
Summing \eqref{lemmaeq} over $R\in \mathcal{T}$ with 
$$
\mathcal{T}:=\{k\in \CK\ :\ \deg k\le Q\}
$$
and noting that $\Fg'(R)=0$ if $\deg R\le Q$ and $R\not\in \mathcal{T}$, we obtain 
\begin{equation} \label{bor1}
\begin{split}
\bigg(\sum_{\substack{R\in \BF_q[t]\ \text{\rm monic} 
\\ \deg R\leq Q}} \Fg'(R)\bigg)\cdot 
\bigg|\sum_{g\in \BF_q[t]} a_g \bigg|^2 \leq & \sum_{R \in \mathcal{T}}\ \sum\limits_{\substack{r \bmod{R} \\  
P\nmid r \ \text{\rm if}\ P\in \FP \ \text{\rm and}\ P|R}} 
\left| S\Big(\frac{r}{R}\Big) \right|^2\\
= & \sum_{\substack{D\not=1\ \text{\rm monic}\\ D|R \ \text{for some}\ R\in \mathcal{T}}}\ \sum\limits_{\substack{s \bmod{D} \\  
(s,D)=1}} 
\left| S\Big(\frac{s}{D}\Big) \right|^2.
\end{split}
\end{equation}
Using Theorem \ref{n=1}, the last line is bounded by  
\begin{equation} \label{bor2}
\begin{split}
\sum_{\substack{D\not=1\ \text{\rm monic}\\ D|R \ \text{for some}\ R\in \mathcal{T}}}\ \sum\limits_{\substack{s \bmod{D} \\  
(s,D)=1}} \left| S\Big(\frac{s}{D}\Big) \right|^2 \le \left(q^{N+1} + \sharp(S) \cdot
q^{Q-1}\right) \cdot \sum_{g \in \BF_q[t]}|a_g|^2,
\end{split}
\end{equation}
where 
$$
S:=\{D\not=1\ \text{monic} \ :\ D|R \ \text{for some}\ R\in \mathcal{T}\}.
$$
We note that 
\begin{equation} \label{bor3}
\sharp(S)\le 2^Q \sharp(\mathcal{T}) = 2^Q \CA_{\CK}(Q).  
\end{equation}
Combining \eqref{bor1}, \eqref{bor2} and \eqref{bor3}, we obtain
\begin{equation} \label{and}
\bigg(\sum_{\substack{R\in \BF_q[t]\ \text{\rm monic} 
\\ \deg R\leq Q}} \Fg'(R)\bigg)\cdot 
\bigg|\sum_{g\in \BF_q[t]} a_g \bigg|^2 \leq \left(q^{N+1} + \CA_{\CK}(Q) \cdot
2^Qq^{Q-1}\right) \sum_{g \in \BF_q[t]} |a_g|^2.
\end{equation}

Now we set
\begin{align*}
a_g & = 
\begin{cases}
 1 &  \text{if}\ g \in \FN^\ast,\\
0 &  \text{otherwise.}
\end{cases}
\end{align*}
Then $\Fw(P) \leq \Fw'(P)$ for all $P \in \FP$ and hence $\Fg'(R)\ge \Fg(R)$ for all $R\in \BF_q[t]$, and  
\[
\sum a_{g\in \BF_q[t]} = \sum_{g\in \BF_q[t]} | a_g|^2 = \# \FN^\ast.
\]
From this and \eqref{and}, we deduce the desired inequality \eqref{desired}. 
\end{proof}

\section{Proof of Theorem \ref{Psettheoff}}
Our proof depends on Theorem \ref{arithls}. We start with the following observation. 
If $P,  R \in \CS$ and $\deg P < \deg R$ then we have $U \not\equiv -R$ mod $ P$ 
for all $U \in \CS$ with $\deg U> \deg P$ (this follows directly from the definition of $\CS$ to be a $\CP$-set). 
From this, we conclude that for every $P\in \CS$ there exist at least $1 + [q^{\deg P}/2]$ residue classes 
$\CR_1(P), \CR_2(P),...,\CR_{\Fw(P)}(P)$ mod $P$ which do not contain any 
element of $\CS$ greater than $\deg P$. 

Now we use the following sieve. Let $Q<N$ be positive integers. Set 
\begin{align*}
\FN:= & \{f \in \mathcal{S}\ :\ Q<\deg f \leq N\},\\
\FP:= & \{ P \in  \mathcal{S}\ :\  \deg P \leq  Q\}, \\
\Omega_P \  :\  = & \DS \bigcup_{i=1}^{\Fw(P)}\CR_i(P) \ \text{\rm if } P \in \FP,\\
\FN^\ast := & \{f\in \FN\ :\ f \notin \Omega_P\  \text{\rm for all} \ P \in \FP \}.
\end{align*}
We note that $\FN=\FN^{\ast}$ in this case, by definition of $\CS$. It follows that
\begin{equation} \label{ob1}
\mathcal{A}_{\mathcal{S}}(N)\le \sharp\left(\FN^\ast\right)+q^Q
\end{equation}
and 
$$
\Fw(P)\geq 1 + [q^{\deg P}/2]
$$
if $P \in \FP$. Hence, in the notations of Theorem \ref{arithls}, we have 
\begin{equation} \label{ob2}
\Fg(R) \geq 1 \mbox{ if } R\in \mathcal{K}
\end{equation}
and 
\begin{equation} \label{ob3}
\Fg(R) =0 \mbox{ if } R\not\in \mathcal{K}.
\end{equation}
Using \eqref{desired}, \eqref{ob1}, \eqref{ob2} and \eqref{ob3}, we obtain
\[
\CA_{\CS}(N)  \leq 
\frac{ q^{N+1} + \mathcal{A}_{\mathcal{K}}(Q)\cdot 2^Qq^{Q-1}}{ \CA_{\CK}(Q) } + q^Q.
\]
Choosing $N \  :\  = Q+\lceil\log_q \mathcal{A}_{\mathcal{K}}(Q)\rceil$,  we deduce that 
\[
\CA_{\CS}\left(Q+\log_q \mathcal{A}_{\mathcal{K}}(Q)\right) \ll (2q)^Q .
\]
Assume that $0<\beta < 1$ such that    
$\CA_{\CS}(N)\geq q^{N\beta} $ for all $N$ large enough. Then it follows that 
\[
(2q)^Q\gg \CA_{\CS}\left(Q+\log_q \mathcal{A}_{\mathcal{K}}(Q)\right) \geq \CA_{\CS}\left(Q+\log_q \mathcal{A}_{\mathcal{S}}(Q)\right)
\ge \left(q^Q\cdot \mathcal{A}_{\mathcal{S}}(Q)\right)^\beta 
\geq q^{Q({\beta+\beta^2})}
\]
for all $Q$ large enough, which implies
$$
\beta +\beta^2 \le 1
$$
or, equivalently, $\beta  \leq  1/\Phi$, where $\Phi=(\sqrt{5}+1)/2$ is the golden ratio. Hence,
$$
\CA_{\CS}(N) \leq q^{(1/\Phi+\varepsilon)N}
$$ 
for infinitely many integers $N$. $\Box$

\section{Proof of Theorem \ref{ErSatheoff}}
Again, our proof depends on Theorem \ref{arithls}. We start with the following observation similar to that at the beginning of the last section. 
If $f,f' \in \CF$ and $P$ is an irreducible monic polynomial, then $f+f'\not\equiv 0\bmod{P^2}$. 
From this, we conclude that for every such $P$ there exist at least $1 + \left[q^{\deg P^2}/2\right]$ residue classes 
$\CR_1\left(P^2\right), \CR_2\left(P^2\right),...,\CR_{\Fw\left(P^2\right)}\left(P^2\right)$ mod $P^2$ which do not contain any 
element of $\CF$. 

Now we use the following sieve. Let $Q<N$ be positive integers. Set 
\begin{align*}
\FN:= & \{f \in \mathcal{F}\ :\ Q<\deg f \leq N\},\\
\FP:= & \{ P^2 \ :\  P \ \text{irreducible and monic and}\ \deg P^2 \leq  Q\}, \\
\Omega_{P^2} \  :\  = & \DS \bigcup_{i=1}^{\Fw\left(P^2\right)}\CR_i\left(P^2\right) \ \text{\rm if } P^2 \in \FP,\\
\FN^\ast := & \{f\in \FN\ :\ f \notin \Omega_{P^2}\  \text{\rm for all} \ P^2 \in \FP \}.
\end{align*}
By the observation at the beginning of this section, we have
\begin{equation} \label{obs1}
\sharp(\CF)\le \sharp\left(\FN^\ast\right)+q^{Q}
\end{equation}
and 
$$
\Fw\left(P^2\right)\geq 1 + \left[q^{\deg P^2}/2\right]
$$
if $P^2 \in \FP$. Hence, in the notation of Theorem \ref{arithls}, we have 
\begin{equation} \label{obs2}
\Fg(R) \geq 1 \mbox{ if } R\in \mathcal{K}
\end{equation}
and 
\begin{equation} \label{obs3}
\Fg(R) =0 \mbox{ if } R\not\in \mathcal{K}.
\end{equation}
Using \eqref{desired}, \eqref{obs1}, \eqref{obs2} and \eqref{obs3}, we obtain
\[
\sharp(\CF)  
\leq  
\frac{ q^{N+1} + \mathcal{A}_{\mathcal{K}}(Q)\cdot 2^Qq^{Q-1}}{ \CA_{\CK}(Q) } + q^Q.
\]
By the prime number theorem for function fields, we have 
$$
\mathcal{A}_{\mathcal{K}}(Q)\ge \sharp\{P\in \BF_q[t]\ :\ P \ \text{monic and irreducible and } \deg P \le Q/2\} \gg \frac{q^{Q/2}}{Q}.
$$
Choosing $Q:=\lceil N/3 \rceil$,  we deduce that 
\[
\sharp(\CF) \ll q^{N(2/3+\varepsilon)},
\]
as claimed. $\Box$

\section{Proof of Theorem \ref{SarRitheoff}}

\begin{proof} Assume that, contrary to the statement of Theorem \ref{SarRitheoff}, we have 
\begin{align}\label{ContMV}
\min\{\sharp(\mathcal{F}),\sharp(\mathcal{G})\}> q^{N(2/3+\varepsilon)} 
\end{align}
for $N$ sufficiently large, but $fg+1$ is square free for all $f \in \CF, g \in \CG$.
Then, by the orthogonality relations for multiplicative characters,  we have 
\begin{align*} 
0 &= \sharp\left\{ (f,g)\in \CF\times \CG\ :\  fg+1 \equiv 0 \bmod{P^2}\right\}  \\
& = \frac{1}{\phi(P^2)} \sum_{\chi\bmod{P^2}}\overline \chi(-1)\sum_{f\in \CF} \sum_{g\in \CG}\chi(fg)
\end{align*}
for every irreducible monic polynomial $P$. This implies 
\begin{equation*}
\begin{split}
\sharp\{ (f,g)\in \CF\times \CG\ :\ (fg, P)=1 \} = & \chi_0(-1)\sum_{f\in \CF}\sum_{g\in \CG}\chi_0(fg) \\
= & -  \sum_{\substack{\chi\bmod{P^2} \\ \chi \neq \chi_0}}\overline \chi(-1)\sum_{f\in \CF} \sum_{g\in \CG}\chi(fg),
\end{split}
\end{equation*}
where $\chi_0$ is the principal character mod $P^2$. Hence, 
\begin{equation} \label{ineq}
\sharp\{ (f,g)\in \CF\times \CG\ :\ (fg, P)=1 \}\le   
\sum_{\substack{\chi \bmod{P^2} \\ \chi \neq \chi_0}}\Big|\sum_{f\in \CF} \chi(f)\Big|\cdot \Big| \sum_{g\in \CG}\chi(g)\Big|.
\end{equation}
Now we set $Q = \lceil N/3\rceil$ and 
$$
S:=\{P\in \BF_q[t]\ :\ P \ \text{irreducible and monic and } \deg P=Q\}.
$$
Summing \eqref{ineq} over all irreducible monic polynomials $P\in S$, we get
\begin{align*}
\sS := & \sum_{P\in S} \sharp\{ (f,g)\in \CF\times \CG\ :\ (fg, P)=1 \} \\
     \le & \sum_{P\in S}\ \sum_{\substack{\chi\bmod{P^2} \\ \chi \neq \chi_0}}
     \Big|\sum_{f\in \CF} \chi(f)\Big|\cdot \Big| \sum_{g\in \CG}\chi(g)\Big|.\nonumber
\end{align*}
Using the  Cauchy-Schwarz inequality, it follows that 
\begin{align}\label{IneqY}
\sS \le \sS_{\CF}^{\frac{1}{2}}\sS_{\CG}^{\frac{1}{2}},
\end{align}
where 
\begin{equation} \label{sCFdef}
\sS_{\CF} =   \sum_{P \in S} \sum_{\substack{\chi\bmod{P^2} \\ \chi \neq \chi_0}}\mathrel \Big|\sum_{f\in \CF} \chi(f)\mathrel \Big|^2
\end{equation}
and $\sS_{\CG}$  is defined similarly with ${\CG}$ in place of $\CF$.
On the other hand, using \eqref{ContMV}, our choice $Q=\lceil N/3 \rceil$ and 
\begin{equation} \label{prime}
\sharp(S)\gg \frac{q^Q}{Q},
\end{equation}
which is a consequence of the prime number theorem for function fields, we have 
\begin{equation}\label{IneqX}
\begin{split}
\sS  \ge & \sum_{P\in S} (\sharp\{ (f,g)\in \CF\times \CG \} - 
     \sharp\{ (f,g) \in \CF\times \CG\ :\  P|f\}-\\ 
              & \ \ \ \ \ \ \ \sharp\{ (f,g)\in \CF\times \CG\ :\  P|g\}) \\
              \ge & \left(\sharp(\CF)\cdot \sharp(\CG) - \left(\sharp(\CF)+ \sharp(\CG)\right)q^{N-Q}\right)\cdot \sharp(S)\\
              \gg & \sharp(\CF)\cdot \sharp(\CG)\cdot  q^{N(1/3-\varepsilon/2)},
\end{split}
\end{equation}
where the last line arrives by using \eqref{ContMV}.
Further, we may rewrite the sum $\sS_{\CF}$ defined in \eqref{sCFdef} in the form 
\begin{align*}
\sS_{\CF} &=   \sum_{P \in S} \bigg(\sum_{\substack{\chi\bmod{P^2} \\ \chi\ \text{primitive}}}\mathrel \Big|\sum_{f\in \CF} \chi(f)\mathrel \Big|^2 + \sum_{\substack{\chi\bmod{P} \\ 
\chi\ \text{primitive}}}\mathrel \Big|\sum_{f\in \CF} \chi(f)\mathrel \Big|^2 \bigg). 
\end{align*}
Using Theorem \ref{multipli}, $Q=\lceil N/3 \rceil$ and \eqref{prime}, we estimate the right-hand side to obtain 
\begin{align}\label{IneqZ}
\sS_{\CF} \le  \es  \left(q^{N+1} + 2\left(\sharp S\right)q^{2Q-1} \right)\cdot \sharp(\CF) \ll q^N \cdot \sharp(\CF),
\end{align}
and in the same way we get
\begin{align}\label{IneqW}
\sS_{\CG}\ll q^N \cdot \sharp(\CG).
\end{align}
It follows from \eqref{IneqY}, \eqref{IneqX}, \eqref{IneqZ} and \eqref{IneqW} that
\begin{align*}
\sharp(\CF)\cdot \sharp(\CG)\cdot  q^{N(1/3-\varepsilon/2)} \ll \sS \ll q^N  \cdot \left(\sharp(\CF)\cdot \sharp(\CG)\right)^{1/2},
\end{align*}
which implies that
\[
\min\{\sharp(\CF),\sharp(\CG)\}\le \left(\sharp(\CF)\cdot \sharp(\CG)\right)^{1/2} \ll q^{N(2/3+\varepsilon/2)},
\] 
contradicting \eqref{ContMV}. 
\end{proof}

\end{document}